\theoremstyle{definition}
\newtheorem{theorem}{Theorem}%[section]
\newtheorem{definition}[theorem]{Definition}
\newtheorem{conjecture}[theorem]{Conjecture}
\newtheorem{lemma}[theorem]{Lemma}
\newtheorem{proposition}[theorem]{Proposition}
\newtheorem{corollary}[theorem]{Corollary}
\newtheorem*{theorem*}{Theorem}
\theoremstyle{remark}
\newtheorem{example}[theorem]{Example}
\begin{document}

\title{Correlation bounds for fields and matroids}

\author{June Huh, Benjamin Schr\"oter, and Botong Wang}

\address{Institute for Advanced Study, Princeton, NJ, USA.}
\email{junehuh@ias.edu}

\address{Institut f\"ur Mathematik, TU Berlin, Berlin, Germany.}
\email{schroeter@math.tu-berlin.de}
%\address{Korea Institute for Advanced Study, Dongdaemun-gu, Seoul 130-722, Republic of Korea.}
%\email{junehuh@kias.re.kr}

\address{University of Wisconsin-Madison, Madison, WI, USA.}
\email{wang@math.wisc.edu}

\maketitle

\section{Introduction and  results}

Let $\mathrm{G}$ be a finite connected graph, and let $w=(w_e)$ be a set of positive weights on the  edges $e$ of $\mathrm{G}$.
Randomly pick a spanning tree $\mathrm{T}$ of $\mathrm{G}$ so that the probability of selecting an individual tree $t$ is proportional to  the product of the weights of its edges:
\[
\mathbb{P}(\mathrm{T}=t)  \ \ \propto \ \ \prod_{e \in t} w_e.
\]
The work of Kirchhoff on electrical networks can be used to show that, for any distinct edges $i$ and $j$, the events $i \in \mathrm{T}$ and $j \in \mathrm{T}$ are negatively correlated:
\[
\mathbb{P}(\text{$\mathrm{T}$ contains $i$} \mid \text{$\mathrm{T}$ contains $j$}) \le \mathbb{P}(\text{$\mathrm{T}$ contains $i$}).
\]
Equivalently, for any distinct edges $i$ and $j$, we have
\[
 \mathbb{P}(i \in \mathrm{T}, j \in \mathrm{T})\ \mathbb{P}(i \notin \mathrm{T}, j \notin \mathrm{T}) \le 
 \mathbb{P}(i \in \mathrm{T}, j \notin \mathrm{T})\ \mathbb{P}(i \notin \mathrm{T}, j \in \mathrm{T}).
\]
We refer to \cite{Pemantle95} and \cite[Chapter 4]{LP} for modern expositions.

Let $E$ be a finite set.
A \emph{matroid} on $E$ is a nonempty collection of subsets of $E$, called \emph{bases} of the matroid, that satisfies the exchange property:
\[
\text{For any bases $b_1,b_2$ and $e_1 \in b_1 \setminus b_2$, there is $e_2 \in b_2 \setminus b_1$ such that $\big( b_1 \setminus e_1 \big)\cup e_2$ is a basis.}
\]
An \emph{independent set} is a subset of a basis, a \emph{dependent set} is a subset of $E$ that is not independent,
a \emph{circuit} is a minimal dependent set, the \emph{rank} of a subset of $E$ is the cardinality of any one of its maximal independent subsets,
 and a \emph{flat} is a subset of $E$ that is maximal for its rank.
The \emph{rank} of a matroid is the cardinality of any one of its bases.
For any unexplained matroid terms and facts, we refer  to Oxley's book \cite{Oxley}.
The collection of spanning trees of a connected graph is the best-known example of a matroid.

Let $\mathrm{M}$ be a matroid on  $E$, and fix a set of positive weights $w=(w_e)$ on the elements $e$ of $E$.
Randomly pick a basis $\mathrm{B}$ of the matroid so that the probability of selecting an individual basis $b$ is proportional to  the product of the weights of its elements:
\[
\mathbb{P}(\mathrm{B}=b)  \ \ \propto \ \ \prod_{e \in b} w_e.
\]
In this more general setup, for any distinct  $i$ and $j$ in $E$, do we still have the negative correlation
%\[
%\mathbb{P}(\text{$B$ contains $i$ and $j$}) \le \mathbb{P}(\text{$B$ contains $i$})\
%\mathbb{P}(\text{$B$ contains $j$})?
%\]
\[
 \mathbb{P}(i \in \mathrm{B}, j \in \mathrm{B})\ \mathbb{P}(i \notin \mathrm{B}, j \notin \mathrm{B}) \le 
 \mathbb{P}(i \in \mathrm{B}, j \notin \mathrm{B})\ \mathbb{P}(i \notin \mathrm{B}, j \in \mathrm{B})?
\]
The answer is ``yes'' if the matroid is regular \cite{FM}, if the matroid is representable over $\mathbb{F}_3$ and $\mathbb{F}_4$ \cite{COSW}, if the cardinality of $E$ is at most $7$, or if the rank of $\mathrm{M}$ is at most $3$ \cite{Wagner05}.
%\item lattice path matroids \cite{CTY}? Balanced, but not clear if Rayleigh. They are Rayleigh, in fact. See above.
%\item sparse paving matroid Alejandro Erikson \cite[Conjecture 1.6]{MNWW} \cite[Conjecture 15.5.10]{Oxley}
Examples below show that distinct elements of $E$ can define positively correlated events for  more general matroids.

\begin{figure}[t]
\begin{subfigure}[b]{0.3\linewidth}
\centering
    \begin{tikzpicture}[scale = 0.35, color = {black}]
      \tikzstyle{linestyle} = [thick, line cap=round, line join=round];

      \coordinate  (v1) at (0:3);
      \coordinate  (v2) at (60:3);
      \coordinate  (v3) at (120:3);
      \coordinate  (v4) at (180:3);
      \coordinate  (v5) at (240:3);
      \coordinate  (v6) at (300:3);
      \coordinate  (a1) at (120:1.3);
      \coordinate  (a2) at (240:1.3);
      \coordinate  (a3) at (0:1.3);
      
      \draw [black, fill=lightgray] (a1) -- (a2) -- (a3) -- cycle;
      \node [fill=lightgray, inner sep = 1pt, circle] at (0,0) {\small $i$};
      
      \draw[linestyle] (v1) -- (v2) -- (v3) -- (v4) -- (v5) -- (v6) -- cycle;
      \draw[linestyle] (a1) -- (a2) -- (a3) --  cycle;
      \draw[linestyle] (v1) -- (a3) -- (v2)  -- (a1) -- (v3);
      \draw[linestyle] (a1) --(v4) -- (a2) -- (v5);
      \draw[linestyle] (a2) --(v6) -- (a3);

       \draw[fill=black] (v1) circle (3pt);
       \draw[fill=black] (v2) circle (3pt);
       \draw[fill=black] (v3) circle (3pt);
       \draw[fill=black] (v4) circle (3pt);
       \draw[fill=black] (v5) circle (3pt);
       \draw[fill=black] (v6) circle (3pt);   
       \draw[fill=black] (a1) circle (3pt);
       \draw[fill=black] (a2) circle (3pt);
       \draw[fill=black] (a3) circle (3pt);

       \node[right] at (v1) {\small $4$};               
       \node[above right] at (v2) {\small $6$};
       \node[above left] at (v3) {\small $5$};               
       \node[left] at (v4) {\small $4$};   
       \node[below left] at (v5) {\small $6$};               
       \node[below right] at (v6) {\small $5$};                 
       \node[below right] at (a3) {\small $1$};     
       \node[left] at (125:1.7) {\small $3$};
       \node[left] at (235:1.7) {\small $2$};
        
      \coordinate  (t) at (345:6.9);% translation
      \coordinate  (w1) at ($(0:3)+(t)$);
      \coordinate  (w2) at ($(60:3)+(t)$);
      \coordinate  (w3) at ($(120:3)+(t)$);
      \coordinate  (w4) at ($(180:3)+(t)$);
      \coordinate  (w5) at ($(240:3)+(t)$);
      \coordinate  (w6) at ($(300:3)+(t)$);
      \coordinate  (b1) at ($(60:1.3)+(t)$);
      \coordinate  (b2) at ($(180:1.3)+(t)$);
      \coordinate  (b3) at ($(300:1.3)+(t)$);

      \draw [black, fill=lightgray] (b1) -- (b2) -- (b3) -- cycle;
       \node [fill=lightgray, inner sep = 1pt, circle] at (t) {\small $j$};
     
      \draw[linestyle] (w1) -- (w2) -- (w3) -- (w4) -- (w5) -- (w6) --cycle ;
      \draw[linestyle] (b1) -- (b2) -- (b3)  --cycle ; 
      \draw[linestyle] (w6) -- (b3) -- (w1) -- (b1) -- (w2) ;
      \draw[linestyle] (b3) -- (w5) -- (b2) -- (w4);
      \draw[linestyle] (b2) -- (w3) -- (b1);

       \draw[fill=black] (w1) circle (3pt);
       \draw[fill=black] (w2) circle (3pt);
       \draw[fill=black] (w3) circle (3pt);
       \draw[fill=black] (w4) circle (3pt);
       \draw[fill=black] (w5) circle (3pt);
       \draw[fill=black] (w6) circle (3pt);   
       \draw[fill=black] (b1) circle (3pt);
       \draw[fill=black] (b2) circle (3pt);
       \draw[fill=black] (b3) circle (3pt);

       \node[right] at (w1) {\small $1$};               
       \node[above right] at (w2) {\small $3$};
       \node[above left] at (w3) {\small $2$};               
       \node[left] at (w4) {\small $1$};   
       \node[below left] at (w5) {\small $3$};               
       \node[below right] at (w6) {\small $2$};                 
       \node[below left] at (b2) {\small $6$};                 
       \node at ($(315:1.9)+(t)$) {\small $4$};
       \node at ($(45:1.9)+(t)$) {\small $5$};
                        
    \end{tikzpicture}
     
   \caption{simplicial matroid}\label{fig:simplicial}
\end{subfigure}
\hfill
\begin{subfigure}[b]{0.3\linewidth}
  \centering
     \begin{tikzpicture}[scale = 0.4, color = {black}]
      \tikzstyle{linestyle} = [thick, line cap=round, line join=round];
      \coordinate  (u) at (0,0.85*3);
      \coordinate  (d) at (0,-0.85*3);
      \coordinate  (v1) at (-0.85*5,0);
      \coordinate  (v2) at (-0.85*3.2,0);
      \coordinate  (v3) at (-0.85*1.5,0);
      \coordinate  (v4) at (0.85*1.5,0);
      \coordinate  (v5) at (0.85*3.2,0);
      \coordinate  (v6) at (0.85*5,0);
%      \coordinate  (v7) at (0.85*4,-2);

      \draw[linestyle] (u) to [bend right] (v1) to [bend right] (d) -- cycle;
      \draw[linestyle] (u) to [bend right] (v2) to [bend right]  (d) ;
      \draw[linestyle] (u) to [bend right] (v3) to [bend right]  (d) ;
      \draw[linestyle] (u) to [bend left] (v4) to [bend left]  (d) ;
      \draw[linestyle] (u) to [bend left] (v5) to [bend left]  (d) ;
      \draw[linestyle] (u) to [bend left] (v6) ;

%      \draw[linestyle] (v6) to[bend left] (v7) ;

      \node[right] at ($0.5*(u)+0.5*(d)$) {$i$};
      \node[above right] at ($0.6*(u)+0.6*(v6)$) {$j$};

       \draw[fill=black] (v1) circle (4pt);
       \draw[fill=black] (v2) circle (4pt);
       \draw[fill=black] (v3) circle (4pt);
       \draw[fill=black] (v4) circle (4pt);
       \draw[fill=black] (v5) circle (4pt);
       \draw[fill=black] (v6) circle (4pt);
%       \draw[fill=black] (v7) circle (4pt);

       \draw[fill=black] (u) circle (4pt);
       \draw[fill=black] (d) circle (4pt);

       \draw (-6,-3.5) circle (0pt);

   \end{tikzpicture}
\caption{truncated graphic matroid}\label{fig:graph}
\end{subfigure}
\hfill
\begin{subfigure}[b]{0.3\linewidth}
  \centering
   \begin{tikzpicture}[scale=0.4,color=black]

      \coordinate  (c1) at (0,1.73-0.8);
      \coordinate  (c2) at (1,-0.8);
      \coordinate  (c3) at (-1,-0.8);
      \coordinate  (j) at ($0.33*(c1)+0.33*(c2)+0.33*(c3)+(0,0.4)$);
      \coordinate  (c4) at ($0.33*(c1)+0.33*(c2)+0.33*(c3)$);      
      \coordinate  (i) at (-3,1.4);
      \coordinate  (a1) at (-0.6,1.8);
      \coordinate  (a2) at (-0.2,1.8);
      \coordinate  (a3) at (0.2,1.8);
      \coordinate  (a4) at (0.6,1.8);
      \coordinate  (b1) at ($(c2)+0.5*(0.282,0.282)+(0.7,-0.55)$);
      \coordinate  (b2) at ($(b1)+1.0*(0.282,0.282)$);
      \coordinate  (b3) at ($(c2)-0.5*(0.282,0.282)+(0.7,-0.55)$);
      \coordinate  (d1) at ($(c3)+0.5*(-0.282,0.282)+(-0.7,-0.55)$);
      \coordinate  (d2) at ($(d1)+1.0*(-0.282,0.282)$);
      \coordinate  (d3) at ($(c3)-0.5*(-0.282,0.282)+(-0.7,-0.55)$);
                
	\draw (c1) circle (2.1) ;
	\draw (c2) circle (2.1) ;
	\draw (c3) circle (2.1) ;
	\draw (c4) circle(4) ;

       \draw[fill=black] (i) circle (3pt);
       \draw[fill=black] (j) circle (3pt);

       \draw[fill=black] (a1) circle (3pt);
       \draw[fill=black] (a2) circle (3pt);
       \draw[fill=black] (a3) circle (3pt);
       \draw[fill=black] (a4) circle (3pt);

       \draw[fill=black] (b1) circle (3pt);
       \draw[fill=black] (b2) circle (3pt);
       \draw[fill=black] (b3) circle (3pt);

       \draw[fill=black] (d1) circle (3pt);
       \draw[fill=black] (d2) circle (3pt);
       \draw[fill=black] (d3) circle (3pt);

       \node[below] at (i) {$i$};              
       \node[below] at (j) {$j$};
       \node at (0,2.5) {\small $A_2$};
       \node at (-1.3,-2.3) {\small $A_3$};
       \node at (1.3,-2.3) {\small $A_4$};
       \node at (0,-3.7) {\small $A_1$};

    \end{tikzpicture}
    \caption{transversal matroid}\label{fig:diagram}
\end{subfigure}
\caption{Positive correlation in matroids}
\end{figure}
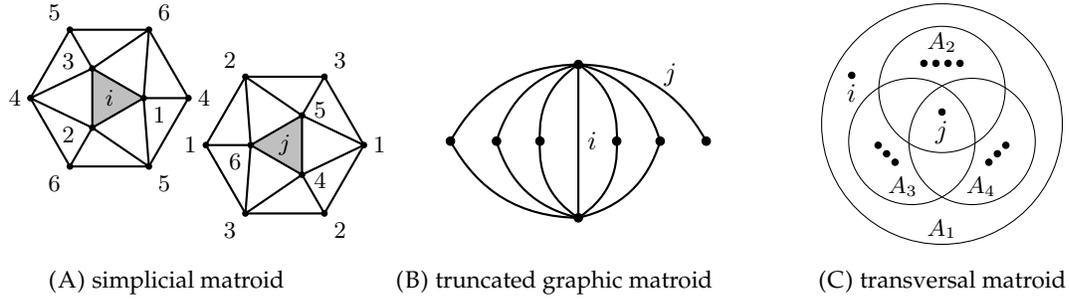

\begin{example}
Let $\mathrm{S}$ be the $2$-dimensional skeleton of the $5$-dimensional simplex.
A spanning tree of $\mathrm{S}$ is a maximal subset of the twenty triangles in $\mathrm{S}$ that does not contain any $2$-cycle over $\mathbb{F}_2$.
Choose one such $\mathrm{B}$ uniformly at random.
Then, for any two disjoint triangles in $\mathrm{S}$, say $i=123$ and $j=456$ in Figure \ref{fig:simplicial},
we have
%\begin{align*}
%\mathbb{P}(\text{$T$ contains $i$ and $j$})&=\frac{11664}{46608}\simeq 0.25257, \\
%\mathbb{P}(\text{$T$ contains $i$})\ \mathbb{P}(\text{$T$ contains $j$})&=\frac{23304}{46608} \cdot \frac{23304}{46608} = 0.25.
%\end{align*}
\begin{align*}
\mathbb{P}(i \in \mathrm{B}, j \in \mathrm{B}) \ \mathbb{P}(i \notin \mathrm{B}, j \notin \mathrm{B})&=\frac{11664}{46608}\cdot \frac{11664}{46608} \simeq 0.06263,\\
\mathbb{P}(i \in \mathrm{B}, j \notin \mathrm{B})\ \mathbb{P}(i \notin \mathrm{B}, j \in \mathrm{B})&=\frac{11640}{46608}\cdot \frac{11640}{46608} \simeq 0.06237.
\end{align*}
%When the coefficient is $\mathbb{Q}$,
%\begin{align*}
%\mathbb{P}(\text{$T$ contains $i$ and $j$})&=\frac{11664}{46620}\simeq 0.25019, \\
%\mathbb{P}(\text{$T$ contains $i$})\ \mathbb{P}(\text{$T$ contains $j$})&=\frac{23310}{46620} \cdot \frac{23310}{46620} = 0.25.
%\end{align*}
%\[
%\frac{b\hspace{0.5mm} b_{ij}}{b_ib_j}=\frac{46620 \cdot 11664}{23310 \cdot 23310} \simeq 1.00077
%\]
This example was found by Andrew Newman.
\end{example}

\begin{example}\label{ex:graphic}
Let $\mathrm{G}$ be the graph in Figure \ref{fig:graph}.
Consider the collection of all forests in $\mathrm{G}$ with exactly six edges, and choose one such $\mathrm{B}$ uniformly at random.
Then, for the edges labelled $i$ and $j$ in Figure \ref{fig:graph}, we have
%\begin{align*}
%\mathbb{P}(\text{$F$ contains $i$ and $j$})&=\frac{80}{384}\simeq 0.20833, \\
%\mathbb{P}(\text{$F$ contains $i$})\ \mathbb{P}(\text{$F$ contains $j$})&=\frac{112}{384} \cdot \frac{272}{384}\simeq 0.20659.
%\end{align*}
\begin{align*}
\mathbb{P}(i \in \mathrm{B}, j \in \mathrm{B}) \ \mathbb{P}(i \notin \mathrm{B}, j \notin \mathrm{B})&=\frac{80}{384}\cdot \frac{80}{384} \simeq 0.04340,\\
\mathbb{P}(i \in \mathrm{B}, j \notin \mathrm{B})\ \mathbb{P}(i \notin \mathrm{B}, j \in \mathrm{B})&=\frac{32}{384}\cdot \frac{192}{384} \simeq 0.04167.
\end{align*}
%\[
%\frac{b\hspace{0.5mm} b_{ij}}{b_ib_j}=\frac{384 \cdot 80}{112 \cdot 272} \simeq 1.00840
%\]
This example, attributed to Paul Seymour, Peter Winkler, and Madhu Sudan, is discussed in \cite[Section 2]{FM}.
\end{example}

\begin{example}\label{ex:transversal}
Let $A_1,A_2,A_3,A_4$ be the four finite sets shown in Figure \ref{fig:diagram}.
A system of distinct representatives 
is a set $\{x_1,x_2,x_3,x_4\}$ of size four such that $x_k \in A_k$ for all $k$.
Choose one such $\mathrm{B}$ uniformly at random.
Then, for the elements labelled $i$ and $j$ in Figure \ref{fig:diagram},
we have%\begin{align*}
%\mathbb{P}(\text{$R$ contains $i$ and $j$})&=\frac{33}{309} \simeq 0.10679, \\
% \mathbb{P}(\text{$R$ contains $i$})\ \mathbb{P}(\text{$R$ contains $j$})&=\frac{69}{309} \cdot \frac{147}{309} \simeq 0.10623.
%\end{align*}
\begin{align*}
\mathbb{P}(i \in \mathrm{B}, j \in \mathrm{B}) \ \mathbb{P}(i \notin \mathrm{B}, j \notin \mathrm{B})&=\frac{33}{309}\cdot \frac{126}{309} \simeq 0.04355,\\
\mathbb{P}(i \in \mathrm{B}, j \notin \mathrm{B})\ \mathbb{P}(i \notin \mathrm{B}, j \in \mathrm{B})&=\frac{36}{309}\cdot \frac{114}{309} \simeq 0.04298.
\end{align*}
This example is from \cite[Section 5]{CW}.\footnote{The proof of \cite[Proposition 5.9]{CW} needs a small correction. 
In the notation of that paper, the numbers should be
$L_e=69$, $L_f=147$, $L_{ef}=33$, $L^{ef}=309$.}
\end{example}

\begin{example}
Let $i$ and $j$ be distinct elements of a $24$-element set $E$,
and let $\mathbb{V}$ be the set of blocks of the Steiner system $\mathrm{S}(5,8,24)$ that contain exactly one of $i$ and $j$.
Consider the collection of all $6$-element subsets of $E$ not contained in any member of $\mathbb{V}$. If we choose one such $\mathrm{B}$ uniformly at random,
we have
\begin{align*}
\mathbb{P}(i \in \mathrm{B}, j \in \mathrm{B}) \ \mathbb{P}(i \notin \mathrm{B}, j \notin \mathrm{B})&=\frac{7315}{124740}\cdot \frac{72149}{124740} \simeq 0.03391,\\
\mathbb{P}(i \in \mathrm{B}, j \notin \mathrm{B})\ \mathbb{P}(i \notin \mathrm{B}, j \in \mathrm{B})&=\frac{22638}{124740}\cdot \frac{22638}{124740} \simeq 0.03293.
\end{align*}
This example, due to Mark Jerrum, shows that a paving matroid need not have the negatively correlation property \cite[Section 4]{Jerrum}.
\end{example}

In Section \ref{sec:bound}, We use the Hodge theory for matroids in \cite{HW,AHK} to bound the correlation between the events $e \in \mathrm{B}$.

\begin{theorem}\label{thm:main1}
For  any distinct elements $i$ and $j$ in a matroid $\mathrm{M}$ of positive rank $d$,
%\[
%\mathbb{P}(\text{$B$ contains $i$ and $j$}) \le 2 \Bigg(1-\frac{1}{d}\Bigg)\  \mathbb{P}(\text{$B$ contains $i$})\ \mathbb{P}(\text{$B$ contains $j$}).
%\]
\[
 \mathbb{P}(i \in \mathrm{B}, j \in \mathrm{B})\ \mathbb{P}(i \notin \mathrm{B}, j \notin \mathrm{B}) \le 
2 \Bigg(1-\frac{1}{d}\Bigg)\  \mathbb{P}(i \in \mathrm{B}, j \notin \mathrm{B})\ \mathbb{P}(i \notin \mathrm{B}, j \in \mathrm{B}).
\]
\end{theorem}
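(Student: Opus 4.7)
The plan is to package the four joint probabilities as the coefficients of a three-variable specialization of the basis generating polynomial of $\mathrm{M}$, and to read the inequality off the Hodge--Riemann signature of the Hessian of a quadratic truncation. Let $B_{\mathrm{M}}(x)=\sum_{b}\prod_{e\in b}x_e$ be the basis generating polynomial, homogeneous and multilinear of degree $d$, and decompose it by the roles of $i$ and $j$:
\[
B_{\mathrm{M}} \;=\; x_i x_j\,P_{ij} \;+\; x_i\,P_i \;+\; x_j\,P_j \;+\; P_\emptyset,
\]
where $P_{ij},P_i,P_j,P_\emptyset$ are polynomials in the variables $x_k$ with $k\ne i,j$, of degrees $d-2,\,d-1,\,d-1,\,d$, respectively. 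Evaluating at the weight vector $w$ produces nonnegative numbers $\alpha,\beta,\gamma,\delta$ proportional (with factors $w_iw_j,\,w_i,\,w_j,\,1$) to the four joint probabilities appearing in the theorem; the inequality to be proved is equivalent to $\alpha\delta\le 2(1-1/d)\,\beta\gamma$.

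To bring Hodge theory to bear, introduce a new variable $u$ and substitute $x_k=w_k u$ for each $k\notin\{i,j\}$, producing
\[
\tilde B(x_i,x_j,u) \;=\; \alpha\,x_ix_ju^{d-2}\;+\;\beta\,x_iu^{d-1}\;+\;\gamma\,x_ju^{d-1}\;+\;\delta\,u^{d}.
\]
The Hodge theory of matroids \cite{HW,AHK}, in its Lorentzian-polynomial repackaging, asserts that $B_{\mathrm{M}}$ is Lorentzian; since the Lorentzian class is closed under pullback along linear maps with nonnegative coefficients, $\tilde B$ is Lorentzian as well. Applying $\partial_u^{d-2}$ leaves a quadratic form in $(x_i,x_j,u)$ whose Hessian
\[
H \;=\; \begin{pmatrix} 0 & (d-2)!\,\alpha & (d-1)!\,\beta\\ (d-2)!\,\alpha & 0 & (d-1)!\,\gamma\\ (d-1)!\,\beta & (d-1)!\,\gamma & d!\,\delta\end{pmatrix}
\]
consequently has at most one positive eigenvalue. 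When $\alpha>0$ (the only nontrivial case) the quadratic form is positive on the vector $(1,1,0)$, so the spectral radius of $H$ is positive, the remaining two eigenvalues are nonpositive, and hence $\det H\ge 0$. A direct expansion gives
\[
\det H \;=\; (d-2)!\,\alpha\bigl(2((d-1)!)^2\beta\gamma \;-\;(d-2)!\,d!\,\alpha\delta\bigr),
\]
and $\det H \ge 0$ rearranges to exactly $\alpha\delta\le \tfrac{2(d-1)}{d}\beta\gamma=2(1-1/d)\,\beta\gamma$.

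The main obstacle is the Hodge-theoretic input rather than the bookkeeping: one needs both the Lorentzian (equivalently, Hodge--Riemann) property of the basis generating polynomial of $\mathrm{M}$ and its persistence under the three-variable linear restriction above. Once those are in hand, the remainder of the proof is the elementary $3\times 3$ determinant computation displayed, and the numerical factor $2(1-1/d)$ emerges automatically from the factorials produced by $\partial_u^{d-2}$.
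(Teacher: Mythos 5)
Your proposal is correct and, perhaps surprisingly, arrives at exactly the same $3\times 3$ matrix and determinant computation as the paper; the genuine difference is how the signature fact ``at most one positive eigenvalue'' is obtained. The paper works inside the Chow ring $A(\overline{\mathrm{M}})$ of $\overline{\mathrm{M}}=\mathrm{M}\oplus\{0\}$: it introduces classes $y_e\in A^1(\overline{\mathrm{M}})$ with $\deg\big(\prod_{e\in B}y_e\big)=1$ for bases and $0$ otherwise (Lemmas \ref{lem:dependent} and \ref{lem:basisdegree}), sets $\mathrm{L}_{ij}=\sum_{e\neq i,j}w_ey_e$, applies the Hodge--Riemann relations of \cite{AHK} to the perturbed class $\mathrm{L}_{ij}+\epsilon\mathrm{L}$ and lets $\epsilon\to 0$, then uses Cauchy interlacing to pass to the $3\times 3$ Gram matrix $\mathrm{H}_{ij}$ on the span of $y_i,y_j,\mathrm{L}_{ij}$ --- and this $\mathrm{H}_{ij}$ is literally your Hessian $H$. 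You instead invoke two facts from the theory of Lorentzian polynomials: that the basis generating polynomial of a matroid is Lorentzian, and that the Lorentzian class is closed under pullback by linear maps with nonnegative coefficients. That machinery is not in the paper (and postdates it; the right reference is Br\"and\'en--Huh, \emph{Lorentzian polynomials}), but it is equivalent in strength, and it lets you bypass both the $\epsilon$-limit argument (the pullback already collapses to three variables, so you apply the $3\times 3$ Hessian criterion directly) and the interlacing step. The trade-off is lighter bookkeeping at the cost of importing a general closure theorem that the paper proves in effect only for its particular restriction. Two small points to tidy: first, for $d=1$ the operator $\partial_u^{d-2}$ is undefined, but both sides of the inequality vanish so the theorem is trivial; second, you should record that $\alpha\delta>0$ forces $\beta\gamma>0$ by basis exchange, so that the cases excluded by ``$\alpha>0$'' are genuinely degenerate.
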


\noindent Theorem \ref{thm:main1} implies the covariance bound
\[
\text{Cov}(\text{$\mathrm{B}$ contains $i$}, \text{$\mathrm{B}$ contains $j$})  < \mathbb{P}(\text{$\mathrm{B}$ contains $i$})\ \mathbb{P}(\text{$\mathrm{B}$ contains $j$}).
\]
Compare the notion of \emph{approximate independence} in \cite[Section 4]{Kahn}.
%Thus two  unlikely events of the form $i \in B$ are weakly correlated.

An element $e$ of a rank $d$ matroid $\mathrm{M}$ is a \emph{loop} if it is contained in no basis of $\mathrm{M}$,  a \emph{coloop} if it is contained in every basis of $\mathrm{M}$, and 
\emph{free} if it is not a coloop and every circuit of $\mathrm{M}$ containing $e$ has cardinality $d+1$.
%An element of a rank $d$ matroid $\mathrm{M}$ is said to be in \emph{general position}  if it is not a coloop and any dependent set containing the element has size at least $d+1$. Note added: This definition is incorrect.
For example, the elements labelled $j$ in matroids of Examples \ref{ex:graphic} and \ref{ex:transversal} are free.
In Section \ref{sec:Mason}, we remove the factor $2$ in  Theorem \ref{thm:main1} when both $i$ and $j$ are free.

\begin{theorem}\label{thm:main2}
For any  distinct free elements $i$ and $j$ in a matroid $\mathrm{M}$ of positive rank $d$,
%\[
%\mathbb{P}(\text{$B$ contains $i$ and $j$}) \le  \Bigg(1-\frac{1}{d}\Bigg)\  \mathbb{P}(\text{$B$ contains $i$})\ \mathbb{P}(\text{$B$ contains $j$}).
%\]
\[
 \mathbb{P}(i \in \mathrm{B}, j \in \mathrm{B})\ \mathbb{P}(i \notin \mathrm{B}, j \notin \mathrm{B}) \le 
 \Bigg(1-\frac{1}{d}\Bigg)\  \mathbb{P}(i \in \mathrm{B}, j \notin \mathrm{B})\ \mathbb{P}(i \notin \mathrm{B}, j \in \mathrm{B}).
\]
\end{theorem}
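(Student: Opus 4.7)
The plan is to reduce the four-term inequality, via the freeness of $i$ and $j$, to the ultra-log-concavity of the weighted independent-set generating polynomial of the double deletion $\mathrm{N}:=\mathrm{M}\setminus\{i,j\}$, and then to invoke Mason's conjecture in weighted form.

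First, I would dispose of a degenerate case: if $\mathrm{N}$ has rank strictly less than $d$, then no basis of $\mathrm{M}$ avoids both $i$ and $j$, so the left-hand side of the desired inequality is zero. Hence one may assume $\mathrm{N}$ has rank $d$, which in particular forces $n:=|E|-2\geq d$.

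Second, I would exploit freeness. The crucial consequence of the definition is that for a free element $e$, any independent set $S\not\ni e$ with $|S|\leq d-1$ satisfies $S\cup\{e\}$ independent: any circuit inside $S\cup\{e\}$ would have to contain $e$, hence would have cardinality $d+1$, exceeding $|S\cup\{e\}|\leq d$. Applying this successively to $i$ and then to $j$, the independent $(d-2)$-subsets of $E\setminus\{i,j\}$ are in bijection with the bases of $\mathrm{M}$ containing $\{i,j\}$; the independent $(d-1)$-subsets of $E\setminus\{i,j\}$ are in bijection with the bases of $\mathrm{M}$ containing $i$ but not $j$, and likewise with those containing $j$ but not $i$; and the independent $d$-subsets of $E\setminus\{i,j\}$ are precisely the bases of $\mathrm{M}$ avoiding both. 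Writing $I_k(\mathrm{N};w):=\sum_{S}\prod_{e\in S}w_e$ for the weighted enumeration of independent $k$-subsets of $\mathrm{N}$, and cancelling the common factor $w_iw_j$ from both sides, the inequality to be proved reduces to
\[
I_{d-2}(\mathrm{N};w)\,I_d(\mathrm{N};w)\ \leq\ \Big(1-\tfrac{1}{d}\Big)\,I_{d-1}(\mathrm{N};w)^2.
\]

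Third, I would conclude with Mason's ultra-log-concavity applied to $\mathrm{N}$ at $k=d-1$: a matroid of rank $d$ on $n\geq d$ elements satisfies
\[
I_{d-2}(\mathrm{N};w)\,I_d(\mathrm{N};w)\ \leq\ \frac{\binom{n}{d-2}\binom{n}{d}}{\binom{n}{d-1}^2}\,I_{d-1}(\mathrm{N};w)^2\ =\ \frac{(d-1)(n-d+1)}{d(n-d+2)}\,I_{d-1}(\mathrm{N};w)^2,
\]
and the binomial ratio on the right is strictly smaller than $(d-1)/d$, yielding the claim (with a slightly stronger constant, in fact).

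The principal hurdle is the Mason-type inequality itself, which I expect this section of the paper to establish or cite in the weighted form, most likely as a consequence of the Hodge-theoretic results of \cite{AHK,HW} or of the Lorentzian-polynomials framework. By contrast, the reduction of the four-term correlation to a single three-term log-concavity statement uses freeness in an essentially lossless way, and the factor $1-1/d$ emerges naturally from the binomial-coefficient ratio $\binom{n}{d-2}\binom{n}{d}/\binom{n}{d-1}^2$.
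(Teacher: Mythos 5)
Your reduction is exactly the one the paper uses: delete $i$ and $j$ to form $\mathrm{N}$, observe that freeness gives a weight-preserving bijection from independent $(d-2)$-, $(d-1)$-, and $d$-subsets of $\mathrm{N}$ to the bases of $\mathrm{M}$ in $\mathcal{B}_{ij}$, $\mathcal{B}^j_i\cong\mathcal{B}^i_j$, and $\mathcal{B}^{ij}$ respectively, cancel $w_iw_j$, and reduce to a weighted log-concavity inequality for $\mathrm{N}$ at $k=d-1$. That part is correct and is precisely what the paper does in Section \ref{sec:Mason}.

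The gap is in the final step: you invoke Mason's Conjecture~\ref{conj:Mason}~(\ref{MasonThird}) in weighted form, i.e.\ the ultra-log-concavity
\[
I_{d-2}(\mathrm{N};w)\,I_d(\mathrm{N};w)\ \le\ \frac{\binom{n}{d-2}\binom{n}{d}}{\binom{n}{d-1}^2}\,I_{d-1}(\mathrm{N};w)^2,
\]
but that statement is \emph{not} available --- as the introduction points out, (\ref{MasonThird}) is known only when $n\le 11$ or $k\le 5$, and the paper does not prove it. What the paper actually establishes (as Proposition~\ref{prop:equivalent}, via the degree map on the Chow ring, the operator $\alpha$, Lemma~\ref{lem:alpha}, and the Hodge--Riemann relations in degree one applied to a $2\times 2$ Gram matrix) is exactly the weighted form of Conjecture~\ref{conj:Mason}~(\ref{MasonSecond}),
\[
I_{d-1}(\mathrm{N};w)^2\ \ge\ \frac{d}{d-1}\,I_{d-2}(\mathrm{N};w)\,I_d(\mathrm{N};w),
\]
which is weaker than what you cite but is precisely what your reduction requires --- the factor $1-\tfrac{1}{d}$ in Theorem~\ref{thm:main2} is this $\tfrac{d-1}{d}$, not the binomial ratio. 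So your argument goes through verbatim once you replace the appeal to (\ref{MasonThird}) with an appeal to (\ref{MasonSecond}) in weighted form, but as written it rests on a conjecture rather than on a theorem, and you would still owe a proof of that Mason-type inequality (which the paper supplies and you correctly anticipate needs to come from the Hodge-theoretic machinery).
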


Can we replace the constant $2$ in Theorem \ref{thm:main1} by a smaller number? 
%For a matroid $\mathrm{M}$ and a set of positive weights $w=(w_e)$, set
To any matroid $\mathrm{M}$, we associate a nonnegative real number $\alpha(\mathrm{M})$ defined by
\[
\alpha(\mathrm{M})=\sup \Big\{ \mathbb{P}(i \in \mathrm{B}, j \in \mathrm{B})\ \mathbb{P}(i \notin \mathrm{B}, j \notin \mathrm{B}) \hspace{0.5mm} / \hspace{0.5mm}  \mathbb{P}(i \in \mathrm{B}, j \notin \mathrm{B})\ \mathbb{P}(i \notin \mathrm{B}, j \in \mathrm{B}) \Big\},
\]
where the supremum is over all distinct non-loop non-coloop elements $i$ and  $j$ in $\mathrm{M}$ and all sets of positive weights $w$ on the elements of $\mathrm{M}$.
When every element of $\mathrm{M}$ is either a loop or a coloop, we set $\alpha(\mathrm{M})=0$.
%Theorem \ref{thm:main1} shows that $\alpha(\mathrm{M})$ is well-defined.
It is straightforward to check that, if $\mathrm{M}^\perp$ is the dual matroid of $\mathrm{M}$ and $\mathrm{M}$ is a minor of another matroid $\mathrm{N}$, then
\[
\alpha(\mathrm{M}) = \alpha(\mathrm{M}^\perp) \ \ \text{and} \ \ \alpha(\mathrm{M}) \le \alpha(\mathrm{N}).
\]
In addition, if $\mathrm{M}_1$ and $\mathrm{M}_2$ have an element that is neither a loop nor a coloop, then
\[
\alpha(\mathrm{M}_1 \oplus \mathrm{M}_2) = \max\big\{\alpha(\mathrm{M}_1),\alpha(\mathrm{M}_2),1\big\}.
\]
We define the \emph{correlation constant}  $\alpha_\mathbb{F}$ of a field $\mathbb{F}$ to be the real number
\[
\alpha_\mathbb{F}=\sup_{} \Big\{ \alpha(\mathrm{M}) \Big\},
\]
where the supremum is over all matroids $\mathrm{M}$ representable over $\mathbb{F}$.
The \emph{correlation constant of matroids}, denoted $\alpha_{\text{Mat}}$, is defined in the same way by taking the supremum over all matroids.
As we can place any number of new elements in parallel to existing elements in any matroid, the values of $\alpha_\mathbb{F}$ and $\alpha_{\text{Mat}}$ remain unchanged if we only consider matroids with constant weights.

In Section \ref{sec:example}, We construct explicit examples to produce a lower bound of $\alpha_\mathbb{F}$ for any field $\mathbb{F}$.

\begin{theorem}\label{thm:main3}
%\
%\begin{enumerate}[(1)]\itemsep 5pt
%\item 
The correlation constant of any field $\mathbb{F}$ satisfies
$\frac{8}{7} \le \alpha_\mathbb{F} \le \alpha_{\text{Mat}} \le 2$.
%\item 
%The correlation constant of matroids satisfies
%$\frac{8}{7} \le \beta \le 2$.
%\end{enumerate}
\end{theorem}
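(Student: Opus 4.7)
The upper bound $\alpha_{\text{Mat}} \le 2$ is an immediate corollary of Theorem~\ref{thm:main1}. For any matroid $\mathrm{M}$ of positive rank $d$, any distinct non-loop, non-coloop elements $i,j$, and any positive weights $w$, that theorem bounds the correlation ratio by $2(1-1/d) < 2$. Hence $\alpha(\mathrm{M}) \le 2 - 2/d \le 2$, and taking the supremum over matroids yields $\alpha_{\text{Mat}} \le 2$.

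For the lower bound $\alpha_\mathbb{F} \ge \frac{8}{7}$, the task is to exhibit, for each field $\mathbb{F}$, an explicit matroid (or a family) representable over $\mathbb{F}$ whose correlation ratio achieves or approaches $\frac{8}{7}$. The plan is to construct a parameterized family $\{\mathrm{M}_n\}$ representable over $\mathbb{F}$ together with designated non-loop, non-coloop elements $i,j$, such that the correlation ratio is a computable rational function of $n$ tending to $\frac{8}{7}$. Starting from a small positively-correlated matroid $\mathrm{M}_0$ (analogous to the examples in Figures~\ref{fig:simplicial}--\ref{fig:diagram}, whose ratios already exceed $1$ but fall short of $\frac{8}{7}$), one amplifies the correlation by a parameterized operation: adding many parallel copies with suitably tuned weights so as to push the marginal probabilities $\mathbb{P}(i \in \mathrm{B})$ and $\mathbb{P}(j \in \mathrm{B})$ toward a fixed target value while the joint probabilities $\mathbb{P}(i,j \in \mathrm{B})$ and $\mathbb{P}(i,j \notin \mathrm{B})$ concentrate. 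Parallel extension (realized by assigning identical representing vectors, which works over every field including $\mathbb{F}_2$) and direct sum both preserve representability, so representability of $\mathrm{M}_n$ reduces to that of $\mathrm{M}_0$.

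The main obstacle is the search for $\mathrm{M}_0$ with the right combination of properties. First, the asymptotic ratio must equal exactly the rational number $\frac{8}{7}$; this imposes a rigid arithmetic relation among the basis-count generating polynomials of $\mathrm{M}_0$ under the amplification, which in turn dictates its combinatorial structure. Second, representability must hold over every field, most restrictively over $\mathbb{F}_2$: since regular matroids are negatively correlated by \cite{FM}, any positively-correlated binary matroid must contain an $F_7$ or $F_7^*$ minor, and moreover must satisfy $|E|\ge 8$ and $\mathrm{rank}\ge 4$ (otherwise $\alpha \le 1$ by the results of \cite{Wagner05} cited in the introduction). A natural candidate for the base matroid is therefore a small extension, parallel connection, or $2$-sum of $F_7$ or $F_7^*$ with another small matroid, weighted appropriately; the identity $\frac{8}{7} = 1 + \frac{1}{7}$ is suggestive of the seven-element Fano structure. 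Combining such a binary construction with analogous (and likely simpler) constructions over larger fields would establish the lower bound uniformly over $\mathbb{F}$.
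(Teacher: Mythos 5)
Your upper bound argument is correct and matches the paper: $\alpha_{\mathrm{Mat}} \le 2$ follows immediately from Theorem~\ref{thm:main1}.

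For the lower bound, however, what you give is a research plan, not a proof. You describe an amplification strategy (parallel copies with tuned weights) and speculate that the base example ought to come from $F_7$ or $F_7^*$ because $\tfrac{8}{7} = 1 + \tfrac{1}{7}$ is ``suggestive,'' but you never produce a matroid, never compute a correlation ratio, and never show anything converges to $\tfrac{8}{7}$. That is the entire substance of the lower bound, and it is missing. The paper's actual construction is a family $\mathrm{M}_p^d$ of rank-$d$ spike-like matroids represented over $\mathbb{F}_p$ by $\mathbf{e}_1$, $\mathbf{e}_2+\cdots+\mathbf{e}_d$, and the vectors $k\mathbf{e}_1 + \mathbf{e}_m$ for $k \in \{1,\ldots,p\}$ and $m \in \{2,\ldots,d\}$, with $i = \mathbf{e}_1$ (the ``tip'') and $j = \mathbf{e}_2 + \cdots + \mathbf{e}_d$. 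One then counts the four basis classes explicitly: $|\mathcal{B}_{ij}| = (d-1)p^{d-2}$, $|\mathcal{B}^{ij}| = (d-1)\binom{p}{2}p^{d-2}$, $|\mathcal{B}_i^j| = p^{d-1}$, and $|\mathcal{B}^i_j| = (p^{d-1}-p^{d-2}) + (d-1)(d-2)\binom{p}{2}p^{d-3}$. The resulting ratio is $\frac{d^2-2d+1}{d^2-3d+4}$, independent of $p$, and maximizing over $d$ gives $\tfrac{8}{7}$ at $d=5$ --- so the value $\tfrac{8}{7}$ arises from an optimization over rank, not from a seven-element Fano denominator. For characteristic zero the paper uses an analogous transversal-matroid family $\mathrm{N}_m^d$ and lets $m \to \infty$, obtaining the same ratio.

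Your intuition is not wholly off: the matroid $\mathrm{M}_2^4$ is $\mathrm{S}_8$, a binary non-regular matroid (hence containing an $F_7$ or $F_7^*$ minor), and the legs of $\mathrm{M}_p^d$ are indeed parallel classes whose sizes grow, which is a kind of parallel amplification. But you would need to discover the exact vector configuration, verify the four basis counts, observe the $p$-independence of the ratio, and carry out the maximization over $d$; none of that can be waved through as a ``rigid arithmetic relation \ldots\ which dictates the combinatorial structure.'' Also be careful with one phrasing: you need a family achieving ratio approaching $\tfrac{8}{7}$, and since $\alpha_\mathbb{F}$ is defined as a supremum, achieving it exactly is not required, but you still need the explicit family to make the supremum argument.
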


What is the correlation constant of $\mathbb{F}_2$?
%What is the correlation constant of $\mathbb{F}_3$? 
What is the correlation constant of $\mathbb{C}$? Does $\alpha_\mathbb{F}$ depend on $\mathbb{F}$? %Is $\beta$ equal to $8/7$?
What is the correlation constant $\alpha_{\text{Mat}}$?
The first question may be the most tractable one, as the only minor-minimal binary matroid with $\alpha(\mathrm{M})$ larger than $1$ is the matroid  represented over $\mathbb{F}_2$ by the matrix
\[
\left[\begin{array}{cccccccc}
1&0&0&0&0&1&1&1\\
0&1&0&0&1&0&1&1\\
0&0&1&0&1&1&0&1\\
0&0&0&1&1&1&1&1
\end{array}
\right].
\]
This matroid, labelled $\mathrm{S}_8$ in Oxley's list \cite[Appendix]{Oxley}, was first found by Seymour and Welsh to have positively correlated pair of elements \cite{SW}.
See \cite{CW} for a proof of the assertion on $\mathrm{S}_8$.
We conjecture, although without much evidence, that the correlation constant of $\mathbb{F}_2$ is $\frac{8}{7}$.
We know no matroid $\mathrm{M}$ with $\alpha(\mathrm{M})$ larger than $\frac{8}{7}$.

The initial motivation for our paper comes from the work of Mason \cite{Mason}, who offered the following three conjectures of increasing strength.
Several other authors studied correlations in matroid theory partly in pursuit of these conjectures \cite{SW,Wagner08,BBL,KN10,KN11}.

\begin{conjecture}\label{conj:Mason}
For any $n$-element matroid $\mathrm{N}$ and any positive integer $k$,
\begin{enumerate}[(1)]\itemsep 5pt
\item\label{MasonFirst} %The sequence $\{i_k\}_{0 \le k \le d}$ is log-concave:
$
I_k(\mathrm{N})^2 \ge I_{k-1}(\mathrm{N})I_{k+1}(\mathrm{N}),
$
\item\label{MasonSecond} %The sequence $\{k!i_k\}_{0 \le k \le d}$ is log-concave:
$
I_k(\mathrm{N})^2 \ge \frac{k+1}{k} I_{k-1}(\mathrm{N})I_{k+1}(\mathrm{N}),
$
\item\label{MasonThird} %The sequence $\{{n\choose k}(denominator!)i_k\}_{0 \le k \le d}$ is log-concave:
$
I_k(\mathrm{N})^2 \ge \frac{k+1}{k}\frac{n-k+1}{n-k} I_{k-1}(\mathrm{N})I_{k+1}(\mathrm{N}),
$
\end{enumerate}
where $I_k(\mathrm{N})$ is the number of $k$-element independent sets of $\mathrm{N}$.  
\end{conjecture}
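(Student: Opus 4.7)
The plan is to deduce all three parts of Conjecture~\ref{conj:Mason} from Hodge-theoretic log-concavity of basis generating polynomials of matroids, using the theorems of \cite{AHK} and \cite{HW} invoked earlier in the paper. The strategy is to realize $(I_k(\mathrm{N}))$ as the coefficient sequence of a specialization of the basis generating polynomial of a carefully chosen auxiliary matroid, and then read off log-concavity through the specialization.

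For part (1), I would adjoin to $\mathrm{N}$ (of rank $d$ on ground set $E$ with $|E|=n$) a disjoint set $E'$ of $n$ new elements placed in the ``freest'' available position, forming an auxiliary matroid $\widetilde{\mathrm{M}}$ of rank $n$ on $E\sqcup E'$. By construction, an $n$-subset of $E\sqcup E'$ is a basis of $\widetilde{\mathrm{M}}$ if and only if its restriction to $E$ is independent in $\mathrm{N}$, so the number of bases of $\widetilde{\mathrm{M}}$ meeting $E$ in exactly $k$ elements equals $\binom{n}{n-k}\,I_k(\mathrm{N})$. Specializing the multivariate basis generating polynomial of $\widetilde{\mathrm{M}}$ by $x_i \mapsto x$ for $i\in E$ and $x_j \mapsto y$ for $j\in E'$ produces $\sum_k \binom{n}{k}\,I_k(\mathrm{N})\,x^k y^{n-k}$. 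Since Lorentzian polynomials are closed under such specializations, this bivariate polynomial is Lorentzian, which unfolds to ultra-log-concavity of the sequence $\binom{n}{k}\,I_k(\mathrm{N})$ and hence to log-concavity of $I_k(\mathrm{N})$, which is (1).

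For the sharper inequalities (2) and (3), the auxiliary construction must be refined so that the binomial or factorial normalization embedded in the Lorentzian property matches the desired inequality. I expect that part (3) corresponds to the statement that the ``unweighted'' polynomial $\sum_k I_k(\mathrm{N})\,x^k y^{n-k}$ is itself Lorentzian, which would follow from a more delicate extension of $\mathrm{N}$---for instance, iterated free extensions or free coextensions combined with truncation---whose basis polynomial, after specialization, weights each choice of ``dummy'' elements by the reciprocal of a binomial coefficient. The intermediate conjecture (2) would drop the $(n-k+1)/(n-k)$ factor and should be obtainable from the same auxiliary matroid after relaxing one of the log-concavity inequalities extracted from the Chow ring.

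The principal obstacle is precisely this matching of normalizations: a naive free extension recovers (1) without effort, but the factors $(k+1)/k$ and $(n-k+1)/(n-k)$ reflect a sharper Hodge--Riemann inequality that the simple auxiliary matroid does not see. Identifying the correct refinement, and verifying that the Hodge--Riemann cone of its Chow ring contains the class of the specialization being used, is the substantive content of the proof; once the core Lorentzian statement is in place, loops, coloops, and parallel classes of $\mathrm{N}$ can be handled by standard matroid surgery without affecting the final inequality.
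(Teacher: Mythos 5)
There is a genuine gap. Your proposal only completes (modulo the Lorentzian-polynomial machinery) the argument for part (\ref{MasonFirst}); for parts (\ref{MasonSecond}) and (\ref{MasonThird}) you explicitly concede that ``the principal obstacle is precisely this matching of normalizations'' and that ``identifying the correct refinement \ldots is the substantive content of the proof.'' But part (\ref{MasonSecond}) is exactly the part this paper actually establishes (Corollary~\ref{cor:Mason}), so the substantive content you defer is the entire point. Part (\ref{MasonFirst}) is merely cited to \cite{AHK}, and part (\ref{MasonThird}) remains a conjecture here; a proof of the full statement is not on offer in the paper and should not be claimed.

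The paper's route to (\ref{MasonSecond}) is also quite different in mechanism from what you sketch. Rather than constructing a rank-$n$ auxiliary matroid and invoking closure properties of Lorentzian polynomials under specialization, the paper first truncates $\mathrm{N}$ to reduce to $k=d-1$, then forms the free extension by two new free elements $i,j$, and proves the needed inequality as a correlation bound (Theorem~\ref{thm:main2}). That bound is in turn derived directly from the Hodge--Riemann relations in degree one for the Chow ring $A(\overline{\mathrm{M}})$: one introduces the classes $\alpha$ and $\mathrm{L}_0=\sum_e w_e y_e$, computes $\deg\bigl(\alpha^{d-m}\prod_{e\in I} y_e\bigr)$ for independent sets (Lemma~\ref{lem:alpha}), forms the $2\times 2$ Gram matrix $\mathrm{H}_0$, and uses Cauchy interlacing to force its determinant to be nonpositive. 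The factor $\tfrac{d}{d-1}$ (equivalently $\tfrac{k+1}{k}$ after truncation) drops out of this determinant; it is not obtained by ``relaxing'' one of the inequalities from a stronger Lorentzian statement. So even where your plan points in a plausible direction, it is not the paper's argument, and the part you leave to be filled in is exactly the part that requires the Hodge-theoretic input. If you want to pursue the Lorentzian-polynomial route, you should be aware that it does yield (\ref{MasonThird}) (and a fortiori (\ref{MasonSecond})), but that is a later and independent development, not something that can be read off from the free-coextension specialization you describe without proving a new log-concavity theorem for the relevant generating polynomial.
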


Conjecture \ref{conj:Mason} (\ref{MasonFirst}) 
was proved % when $k$ is at most $7$ \cite{Dowling},
% when $\mathrm{M}$ is the graphic matroid of an outerplanar graph \cite{Mahoney} ,
%when $\mathrm{M}$ is realizable over some field  \cite{Lenz}, and
 in \cite{AHK}.
Conjecture \ref{conj:Mason} (\ref{MasonThird}) is known to hold when %all circuits of $\mathrm{M}$ have sizes $1$, $2$, $k$, $k+1$  \cite{Seymour},
%when $k$ is at most $3$ in  \cite{HS} , and
$n$ is at most $11$ or $k$ is at most $5$  \cite{KN11}.  
We refer to \cite{Seymour,Dowling, Mahoney, Zhao, HK,HS, Lenz} for other partial results on Conjecture \ref{conj:Mason}.

Conjecture \ref{conj:Mason} (\ref{MasonSecond}) follows from the special case of Theorem \ref{thm:main2} when the weight $w$ is constant.

\begin{corollary}\label{cor:Mason}
Conjecture \ref{conj:Mason} (\ref{MasonSecond}) holds.
\end{corollary}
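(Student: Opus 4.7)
The plan is to deduce Conjecture \ref{conj:Mason}(\ref{MasonSecond}) from the uniform-weight case of Theorem \ref{thm:main2} by constructing, for each $k$, an auxiliary matroid of rank $k+1$ with two distinguished free elements $i,j$ whose basis counts, stratified by inclusion of $\{i,j\}$, recover $I_{k-1}(\mathrm{N})$, $I_k(\mathrm{N})$, and $I_{k+1}(\mathrm{N})$. Writing $r$ for the rank of $\mathrm{N}$, the case $k\ge r$ is trivial because then $I_{k+1}(\mathrm{N})=0$, so I assume $1\le k\le r-1$. Let $\mathrm{N}'$ be the truncation of $\mathrm{N}$ to rank $k+1$; truncation preserves independent sets of size $\le k+1$, so $I_s(\mathrm{N}')=I_s(\mathrm{N})$ for every $s\le k+1$. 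Now set
\[
\mathrm{M} \ := \ (\mathrm{N}'+i)+j,
\]
where $+e$ denotes the free extension that adjoins a new element $e$ by declaring $\{e\}\cup S$ independent precisely when $S$ is independent and $|S|$ is strictly less than the current rank. Then $\mathrm{M}$ has rank $d=k+1$.

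Two properties of $\mathrm{M}$ require verification. First, both $i$ and $j$ are free in $\mathrm{M}$ in the sense of the paper: every circuit of a free extension through the newly adjoined point has size one more than the rank, so $j$ is free immediately, and a short case analysis shows that every circuit of $\mathrm{M}$ through $i$ is either a circuit of $\mathrm{N}'+i$ through $i$ (of size $d+1$) or of the form $\{i,j\}\cup I$ for some $k$-element independent set $I$ of $\mathrm{N}'$ (again of size $d+1$); neither element is a coloop because every basis of $\mathrm{N}'$ remains a basis of $\mathrm{M}$ that avoids both. Second, unpacking the two free extensions shows that a $(k+1)$-subset $B$ of the ground set of $\mathrm{M}$ is a basis of $\mathrm{M}$ if and only if $B\setminus\{i,j\}$ is an independent set of $\mathrm{N}$, so sorting the bases of $\mathrm{M}$ by their intersection with $\{i,j\}$ gives
\begin{align*}
\bigl|\{B:\{i,j\}\subseteq B\}\bigr|&=I_{k-1}(\mathrm{N}),\\
\bigl|\{B:B\cap\{i,j\}=\{i\}\}\bigr|=\bigl|\{B:B\cap\{i,j\}=\{j\}\}\bigr|&=I_k(\mathrm{N}),\\
\bigl|\{B:B\cap\{i,j\}=\emptyset\}\bigr|&=I_{k+1}(\mathrm{N}).
\end{align*}

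With these in hand I apply Theorem \ref{thm:main2} to $\mathrm{M}$ with uniform weights $w_e=1$: the total basis count cancels from the four probabilities and the factor $1-\tfrac{1}{d}=\tfrac{k}{k+1}$ produces $I_{k-1}(\mathrm{N})\,I_{k+1}(\mathrm{N})\le\tfrac{k}{k+1}\,I_k(\mathrm{N})^2$, which is exactly Conjecture \ref{conj:Mason}(\ref{MasonSecond}) after rearrangement. The one step that calls for care is checking that $i$ remains free after the second free extension; everything else is routine bookkeeping inside the well-known free extension construction.
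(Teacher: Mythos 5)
Your argument is the same one the paper gives: truncate $\mathrm{N}$ to rank $k+1$, take the double free extension by $i$ and $j$, observe that bases of the resulting matroid stratified by their intersection with $\{i,j\}$ have cardinalities $I_{k-1}(\mathrm{N})$, $I_k(\mathrm{N})$, $I_k(\mathrm{N})$, $I_{k+1}(\mathrm{N})$, and apply Theorem \ref{thm:main2} with uniform weights. You spell out the freeness verification and the trivial case $k\ge\operatorname{rk}(\mathrm{N})$ more explicitly than the paper does, but the construction and the deduction are essentially identical.
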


The implication is based on two standard constructions \cite[Chapter 7]{Oxley}. %reviewed in Section \ref{sec:Mason}.  %\cite[Chapter 7]{Oxley}.
First, we use the truncation of $\mathrm{N}$ to reduce Conjecture \ref{conj:Mason} (\ref{MasonSecond})  to the case $k=d-1$,
where $d$ is the rank of $\mathrm{N}$.
Next, we construct the free extension $\mathrm{M}$ of  $\mathrm{N}$ by adding two new free elements $i$ and $j$.
If we pick a basis $\mathrm{B}$ of $\mathrm{M}$ uniformly at random, then
\begin{align*}
 \mathbb{P}(i \in \mathrm{B}, j \in \mathrm{B})\ \mathbb{P}(i \notin \mathrm{B}, j \notin \mathrm{B}) &= I_{d-2}(\mathrm{N}) \cdot I_{d}(\mathrm{N})\hspace{0.5mm} /\hspace{0.5mm} ( I_{d-2}(\mathrm{N}) + 2I_{d-1}(\mathrm{N}) + I_{d}(\mathrm{N}) )^2,\\
  \mathbb{P}(i \in \mathrm{B}, j \notin \mathrm{B})\ \mathbb{P}(i \notin \mathrm{B}, j \in \mathrm{B})  &= I_{d-1}(\mathrm{N}) \cdot I_{d-1}(\mathrm{N})\hspace{0.5mm} / \hspace{0.5mm}( I_{d-2}(\mathrm{N}) + 2I_{d-1}(\mathrm{N}) + I_{d}(\mathrm{N}) )^2.
\end{align*}
%where $I_k=I_k(\mathrm{M})$.
Now Conjecture \ref{conj:Mason} (\ref{MasonSecond}) for $\mathrm{N}$ is  Theorem \ref{thm:main2} for $i$ and $j$ in $\mathrm{M}$.

Conjecture \ref{conj:Mason} (\ref{MasonSecond}) implies an entropy bound that cannot be deduced from Conjecture \ref{conj:Mason} (\ref{MasonFirst}).
Recall that the \emph{Shannon entropy} $H(\mathrm{X})$ of a discrete random variable $\mathrm{X}$ is, by definition,
\[
H(\mathrm{X})=-\sum_{k}\mathbb{P}(\mathrm{X}=k) \log \mathbb{P}(\mathrm{X}=k),
\]
where the logarithm is in base $2$ and the sum is over all values of $\mathrm{X}$ with nonzero probability.
For a rank $d$ matroid $\mathrm{M}$, let $\mathrm{I}_\mathrm{M}$ be the size of an independent set drawn uniformly at random from the collection of all independent sets of $\mathrm{M}$.
For any $d$, uniform matroids of rank $d$ show that
\[
\inf_{\text{rk}(\mathrm{M})=d} H( \mathrm{I}_\mathrm{M} )=0,
\]
where the infimum is over all matroids of rank $d$.
We show that, asymptotically, the entropy of $\mathrm{I}_\mathrm{M}$ is at most half of the obvious upper bound $\log d$ given by Jensen's inequality.

\begin{corollary}\label{cor:entropy}
%\begin{enumerate}[(1)]\itemsep 5pt
%\item Random spanning forests in graphs satisfy
%\[
%\lim_{d \to \infty}\Bigg( \sup_{\text{rk}(\mathrm{G})=d} H[\hspace{0.5mm} |I_\mathrm{G}| \hspace{0.5mm} ] / \log d \Bigg) =1/2,
%\]
%where the supremum is over all graphic matroids of rank $d$.
%\item 
Uniform random independent sets of matroids satisfy
\[
\lim_{d \to \infty}\Bigg( \sup_{\text{rk}(\mathrm{M})=d} H( \mathrm{I}_\mathrm{M} ) / \log d \Bigg) =\frac{1}{2},
\]
where the supremum is over all matroids of rank $d$.
%\end{enumerate}
\end{corollary}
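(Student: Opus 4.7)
The plan is to prove matching bounds $\sup_{\text{rk}(\mathrm{M})=d} H(\mathrm{I}_\mathrm{M}) = \tfrac{1}{2}\log d + O(1)$, from which the claimed limit follows immediately. The upper bound will leverage Corollary~\ref{cor:Mason}; for the matching lower bound we will exhibit an explicit family of matroids.

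\textbf{Upper bound.} For a matroid $\mathrm{M}$ of rank $d$, write $p_k := I_k(\mathrm{M}) / \sum_j I_j(\mathrm{M})$. Normalizing Corollary~\ref{cor:Mason} yields the ultra-log-concave inequality
\[
k\, p_k^2 \ \ge\ (k+1)\, p_{k-1}\, p_{k+1} \qquad (1 \le k \le d-1).
\]
The first step is to upgrade this to the variance estimate $\text{Var}(\mathrm{I}_\mathrm{M}) \le \mathbb{E}[\mathrm{I}_\mathrm{M}]$. Setting $s_k := (k+1)\, p_{k+1}/p_k$ for $0 \le k \le d-1$ and $s_d := 0$, the displayed inequality is exactly the statement that $s_k$ is nonincreasing in $k$. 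Direct reindexing gives $\mathbb{E}[\mathrm{I}_\mathrm{M}] = \mathbb{E}[s_{\mathrm{I}_\mathrm{M}}]$ and $\mathbb{E}[\mathrm{I}_\mathrm{M}(\mathrm{I}_\mathrm{M}-1)] = \mathbb{E}[\mathrm{I}_\mathrm{M}\, s_{\mathrm{I}_\mathrm{M}}]$, so Chebyshev's sum inequality, applied to the increasing function $k \mapsto k$ and the nonincreasing function $k \mapsto s_k$, yields $\mathbb{E}[\mathrm{I}_\mathrm{M}\, s_{\mathrm{I}_\mathrm{M}}] \le \mathbb{E}[\mathrm{I}_\mathrm{M}]\,\mathbb{E}[s_{\mathrm{I}_\mathrm{M}}]$ and hence $\text{Var}(\mathrm{I}_\mathrm{M}) \le \mathbb{E}[\mathrm{I}_\mathrm{M}] \le d$. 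The second step promotes this to an entropy bound via the standard uniform-smoothing trick: with $U$ uniform on $[0,1)$ independent of $\mathrm{I}_\mathrm{M}$, the differential entropy of $Z := \mathrm{I}_\mathrm{M} + U$ equals $H(\mathrm{I}_\mathrm{M})$ while $\text{Var}(Z) \le d + \tfrac{1}{12}$, so the Gaussian max-entropy inequality gives
\[
H(\mathrm{I}_\mathrm{M}) \ \le\ \tfrac{1}{2}\log\bigl(2\pi e\, \text{Var}(Z)\bigr) \ \le\ \tfrac{1}{2}\log d + O(1),
\]
with constants independent of $\mathrm{M}$.

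\textbf{Lower bound and main obstacle.} For the matching lower bound we take $\mathrm{M}_d := U_{d,d}$, the free matroid on a $d$-element set: every subset is independent, so $I_k(\mathrm{M}_d) = \binom{d}{k}$ and $\mathrm{I}_{\mathrm{M}_d}$ is Binomial$(d, \tfrac{1}{2})$, with entropy $\tfrac{1}{2}\log(\pi e d/2) + o(1)$ by Stirling. Combining this with the upper bound establishes the stated limit. The only nontrivial step is extracting the variance bound $\text{Var}(\mathrm{I}_\mathrm{M}) \le \mathbb{E}[\mathrm{I}_\mathrm{M}]$ from ultra-log-concavity; the remaining ingredients are either Corollary~\ref{cor:Mason} itself or standard analytic facts. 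One could alternatively appeal to the Johnson--Kontoyiannis--Madiman maximum-entropy characterization of the Poisson among ultra-log-concave laws, but the Chebyshev route above is elementary and self-contained.
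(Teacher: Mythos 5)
Your proof is correct, and the lower bound via the boolean matroid $U_{d,d}$ (whose independent-set-size is Binomial$(d,\tfrac{1}{2})$) is exactly what the paper does. The upper bound, however, takes a genuinely different and more elementary route. The paper invokes Johnson's theorem that the Poisson distribution maximizes Shannon entropy among ultra-log-concave laws with a fixed mean, giving $H(\mathrm{I}_\mathrm{M}) \le H(\mathrm{P}(\lambda))$ with $\lambda=\mathbb{E}(\mathrm{I}_\mathrm{M})$, and then cites the Cover--Thomas bound $H(\mathrm{P}(\lambda)) \le \tfrac{1}{2}\log\bigl(2\pi e(\lambda+\tfrac{1}{12})\bigr)$. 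You instead extract from ultra-log-concavity the variance estimate $\mathrm{Var}(\mathrm{I}_\mathrm{M})\le\mathbb{E}(\mathrm{I}_\mathrm{M})$ via Chebyshev's sum inequality applied to $k$ and the nonincreasing hazard-type sequence $s_k=(k+1)p_{k+1}/p_k$, then apply the discrete entropy bound $H(X)\le \tfrac{1}{2}\log\bigl(2\pi e(\mathrm{Var}(X)+\tfrac{1}{12})\bigr)$ (the uniform-smoothing/Gaussian max-entropy argument) directly to $\mathrm{I}_\mathrm{M}$. This reaches the identical numerical bound $\tfrac{1}{2}\log\bigl(2\pi e(d+\tfrac{1}{12})\bigr)$ but bypasses Johnson's maximum-entropy characterization of the Poisson, replacing that nontrivial input with an elementary rearrangement inequality; your Chebyshev computation (checking that $\mathbb{E}[s_{\mathrm{I}_\mathrm{M}}]=\mathbb{E}[\mathrm{I}_\mathrm{M}]$, $\mathbb{E}[\mathrm{I}_\mathrm{M}\,s_{\mathrm{I}_\mathrm{M}}]=\mathbb{E}[\mathrm{I}_\mathrm{M}(\mathrm{I}_\mathrm{M}-1)]$, and that $s_k$ is nonincreasing) is correct. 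One small nit: the result you credit to ``Johnson--Kontoyiannis--Madiman'' in the final remark is the one the paper attributes to Johnson alone (2007).
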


Corollary \ref{cor:entropy} is based on a result of Johnson \cite[Theorem 2.5]{Johnson}, who showed that the Poisson distribution maximizes entropy in the class of ultra log-concave distributions.
Recall that a random variable $\mathrm{X}$ taking its values in $\mathbb{N}$ is said to have the \emph{Poisson distribution with parameter $\lambda$} if
\[
\mathbb{P}(\mathrm{X}=k)=\frac{\lambda^k e^{-\lambda}}{k!} \ \  \text{for all $k \in \mathbb{N}$.}
\]
Combined with Conjecture \ref{conj:Mason} (\ref{MasonSecond}), Johnson's result implies that%, for any rank $d$ matroid $\mathrm{M}$,
\[
H(\mathrm{I}_\mathrm{M}) \le H(\mathrm{P}(\lambda)),
\]
where $\mathrm{P}(\lambda)$ is the Poisson distribution with parameter $\lambda=\mathbb{E}(\mathrm{I}_\mathrm{M})$.
Using known bounds for the entropy of Poisson distributions from information theory \cite[Theorem 8.6.5]{CT}, we get
\[
H(\mathrm{I}_\mathrm{M})  \le \frac{1}{2} \log \Bigg(2 \pi e\Big(d+\frac{1}{12}\Big)\Bigg).
\]
In general, an upper bound of the entropy of a random variable $\mathrm{X}$ implies a concentration of $\mathrm{X}$ \cite[Chapter 22]{Jukna}. The above bound of $H(\mathrm{I}_\mathrm{M})$, for example, gives the following.
%We conjecture that random flats of matroids satisfy
%\[
%\lim_{d \to \infty}\Bigg( \sup_{\text{rk}(\mathrm{M})=d} H(\hspace{0.5mm} |F_\mathrm{M}| \hspace{0.5mm}) / \log d \Bigg) =1/2.
%\]
%This is implied by, but possibly more tractable than, another one of Mason's conjectures for the number of flats of matroids \cite{Mason}.

\begin{corollary}\label{cor:concentration}
For any matroid $\mathrm{M}$ of rank $d$, there is $k$ such that
\[
\mathbb{P}(\mathrm{I}_\mathrm{M}=k) > \frac{1}{5\sqrt{d}}.
\]
\end{corollary}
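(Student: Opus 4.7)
The plan is to extract the concentration bound directly from the entropy inequality $H(\mathrm{I}_\mathrm{M}) \le \tfrac{1}{2} \log\!\big(2\pi e (d + \tfrac{1}{12})\big)$ established in the paragraph preceding the statement. The classical link between entropy and the maximum pointwise probability does all the remaining work: for any discrete random variable $\mathrm{X}$ with distribution $(p_k)$, one has
\[
H(\mathrm{X}) \;=\; \sum_k p_k \log\!\frac{1}{p_k} \;\ge\; \log\!\frac{1}{\max_k p_k} \cdot \sum_k p_k \;=\; \log\!\frac{1}{\max_k p_k},
\]
so $\max_k p_k \ge 2^{-H(\mathrm{X})}$.

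Applied to $\mathrm{X} = \mathrm{I}_\mathrm{M}$, this yields
\[
\max_k \mathbb{P}(\mathrm{I}_\mathrm{M} = k) \;\ge\; 2^{-H(\mathrm{I}_\mathrm{M})} \;\ge\; \frac{1}{\sqrt{2\pi e\,\big(d + \tfrac{1}{12}\big)}}.
\]
So it suffices to verify
\[
\frac{1}{\sqrt{2\pi e\,(d + \tfrac{1}{12})}} \;>\; \frac{1}{5\sqrt{d}},
\]
which is equivalent to $(25 - 2\pi e)\,d > \tfrac{\pi e}{6}$. Since $2\pi e < 17.1$, the coefficient $25 - 2\pi e > 7.9$ comfortably exceeds $\tfrac{\pi e}{6} < 1.43$, so the inequality holds for every $d \ge 1$ (and the $d = 0$ case is vacuous or trivial since $\mathrm{I}_\mathrm{M} \equiv 0$).

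There is no real obstacle to overcome: the corollary is essentially a mechanical consequence of the entropy bound proved just above, combined with the Jensen-type inequality relating entropy to the $\ell^\infty$-norm of the distribution. The only care required is choosing the numerical constant $5$ large enough that the resulting inequality holds uniformly in $d$, and the estimate $2\pi e < 25$ is precisely what makes $5$ work.
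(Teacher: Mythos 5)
Your proposal is correct and follows essentially the same route as the paper: it combines the entropy bound $H(\mathrm{I}_\mathrm{M}) \le \frac{1}{2}\log(2\pi e(d+\frac{1}{12}))$ with the standard implication $H(\mathrm{X}) \le \log t \Rightarrow \max_k \mathbb{P}(\mathrm{X}=k) \ge \frac{1}{t}$, and the numerical check that $2\pi e < 25$ justifies the constant $5$ exactly as in the paper.
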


\noindent Clearly, Corollaries \ref{cor:entropy} and \ref{cor:concentration} cannot be deduced from Conjecture \ref{conj:Mason} (\ref{MasonFirst}) alone.

\subsection*{Acknowledgements}

We thank Noga Alon, Petter Br\"and\'en, Jim Geelen,  Mark Jerrum, Matthew Kahle, Jaehoon Kim,  Russell Lyons,   Andrew Newman, and David Wagner
for valuable comments and discussions.

\section{Hodge theory for matroids}

%Proofs of Theorems \ref{thm:main1} and \ref{thm:main2} depend on the Hodge theory for matroids.
We review the results of \cite{HW} and \cite{AHK} that will be used to prove Theorems  \ref{thm:main1} and \ref{thm:main2}.
For our purposes, we may assume that matroids do not have any loops.
In the rest of this paper, we fix a positive integer $n$ and  work with loopless matroids on finite sets
\[
E=\{1,\ldots,n\} \quad \text{and} \quad \overline{E}=\{0,1,\ldots,n\}.
\] 
Our notations will be consistent with those of \cite[Section 2]{HW}.
%Let $\mathrm{M}$ be a rank $d$ loopless matroid on $E$, and let $\overline{\mathrm{M}}$ be the matroid on $\overline{E}$ obtained from $\mathrm{M}$ by adding a coloop.

Let $\overline{\mathrm{M}}$ be a loopless matroid on $\overline{E}$ of rank $d+1$,
and let $\overline{\mathscr{L}}$ be the lattice of flats of $\overline{\mathrm{M}}$.
%In applications, $\overline{\mathrm{M}}$ will the matroid obtained from a  matroid $\mathrm{M}$ on $E$ by adding $0$ as a coloop.
Introduce variables $x_{\overline{F}}$,
one for each nonempty proper flat $\overline{F}$ of $\overline{\mathrm{M}}$, and consider the polynomial ring
\[
S(\overline{\mathrm{M}})=\mathbb{R}[x_{\overline{F}}]_{\overline{F} \neq \varnothing, \overline{F} \neq \overline{E},\overline{F} \in \overline{\mathscr{L}}}.
\]
The \emph{Chow ring} $A(\overline{\mathrm{M}})$ is the quotient of $S(\overline{\mathrm{M}})$
by the ideal  generated by  the linear forms
\[
\sum_{e_1 \in \overline{F}} x_{\overline{F}} - \sum_{e_2 \in \overline{F}} x_{\overline{F}},
\]
one for each pair of distinct elements $e_1$ and $e_2$ of  $\overline{E}$, and the quadratic monomials
\[
x_{\overline{F}_1}x_{\overline{F}_2},
\]
one for each pair of incomparable nonempty proper flats $\overline{F}_1$ and $\overline{F}_2$ of $\overline{\mathrm{M}}$.
We denote the degree $q$ component of $A(\overline{\mathrm{M}})$ by $A^q(\overline{\mathrm{M}})$.

\begin{definition}
A real-valued function $c$ on $2^{\overline{E}}$  is said to be \emph{strictly submodular} if
$c_\varnothing=0$, $c_{\overline{E}}=0$,
and, for any pair of incomparable subsets $I_1,I_2 \subseteq \overline{E}$, we have
\[
 c_{I_1}+c_{I_2} > c_{I_1\,\cap\,I_2} +c_{I_1\,\cup\,I_2}.
 \]
  A strictly submodular function $c$ defines an element $\mathrm{L}(c)= \sum_{\overline{F}} c_{\overline{F}} x_{\overline{F}}$ in $A^1(\overline{\mathrm{M}})$.
\end{definition}

%The \emph{K\"ahler cone} $\mathrm{K}(\overline{\mathrm{M}})$ is defined to be the set of all such elements  in $A^1(\overline{\mathrm{M}})$. 
Note that strictly submodular functions on $2^{\overline{E}}$  exist. For example, we have the function
\[
c_I=(\text{number of elements in $I$})(\text{number of elements not in $I$}).
\]
%This is the support function of the $n$-dimensional permutohedron.

We may now state the hard Lefschetz theorem and the Hodge-Riemann relations for matroids 
\cite[Theorem 1.4]{AHK}.
The function ``$\text{deg}$'' in Theorem \ref{HLHR} is the isomorphism  
$A^d(\overline{\mathrm{M}}) \simeq \mathbb{R}$
constructed in \cite[Section 5.3]{AHK}. 
This isomorphism is uniquely determined by its property 
\[
\text{deg}(x_{\overline{F}_1}x_{\overline{F}_2} \cdots x_{\overline{F}_d})=1 \ \ \text{for any chain of nonempty proper flats $\overline{F}_1 \subsetneq \overline{F}_2 \subsetneq \cdots \subsetneq \overline{F}_d$ in $\overline{\mathrm{M}}$.}
\]

\begin{theorem}\label{HLHR}
Let $\mathrm{L}$ be an element of $A^1(\overline{\mathrm{M}})$ attached to a strictly submodular function on $2^{\overline{E}}$.
\begin{enumerate}[(1)]\itemsep 5pt
\item (Hard Lefschetz theorem) For every nonnegative integer $q \le \frac{d}{2}$, the multiplication by $\mathrm{L}$ defines an isomorphism
\[
A^q(\overline{\mathrm{M}}) \longrightarrow A^{d-q}(\overline{\mathrm{M}}), \qquad \eta \longmapsto  \mathrm{L}^{d-2q} \ \eta.
\]
\item (Hodge-Riemann relations) For every nonnegative integer $q \le \frac{d}{2}$, the multiplication by $\mathrm{L}$ defines a symmetric bilinear form
\[
A^q(\overline{\mathrm{M}}) \times A^q(\overline{\mathrm{M}}) \longrightarrow \mathbb{R},  \qquad (\eta_1,\eta_2) \longmapsto (-1)^q \ \text{deg}(   \eta_1 \eta_2 \mathrm{L}^{d-2q})
\]
that is positive definite on the kernel of 
$\mathrm{L}^{d-2q+1}$.
\end{enumerate} 
\end{theorem}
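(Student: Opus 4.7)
The plan is to prove the \emph{Kähler package} --- Poincaré duality together with the hard Lefschetz theorem and the Hodge-Riemann relations --- as a single bundle by induction, rather than attempting to prove (1) or (2) in isolation. The positivity in (2) seems only accessible if it is transported along a combinatorial deformation connecting the given matroid to a simpler one, and both Poincaré duality and hard Lefschetz are needed as structural scaffolding to make each deformation step go through. A parallel sanity check is that in the realizable case $A(\overline{\mathrm{M}})$ coincides with the Chow ring of the wonderful compactification of a hyperplane arrangement complement, where (1) and (2) follow from classical Hodge theory on a smooth projective variety; the task is to run a purely combinatorial version of that argument.

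The induction I would set up is indexed by order filters $Z$ in the lattice of flats $\overline{\mathscr{L}}$. To each such $Z$ one attaches a graded algebra $A(\overline{\mathrm{M}},Z)$ that specializes to the trivial ring when $Z = \varnothing$ and to $A(\overline{\mathrm{M}})$ when $Z$ is the set of all nonempty proper flats. The main combinatorial move, a \emph{matroidal flip}, removes a single flat $\overline{F}$ from $Z$; I would then establish a decomposition relating the two adjacent algebras that mimics the way cohomology behaves under a blow-up, with the ``exceptional'' summand expressible as a tensor product of Chow rings of strictly smaller matroids --- namely the localization $\overline{\mathrm{M}}_{\overline{F}}$ and the contraction $\overline{\mathrm{M}}^{\overline{F}}$ --- to which the inductive hypothesis already applies. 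The element $\mathrm{L}(c)$ pulls back to a corresponding Lefschetz element on each smaller piece, so the smaller Kähler packages combine into the statement one step up.

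The hard part will be showing that the signature of the Hodge-Riemann pairing is preserved at every flip. This splits into a non-degeneracy Gysin-type statement for the new summand, which should follow from Poincaré duality in the smaller rings, and a sign-coherence argument ensuring that the pairings on the old and new pieces assemble into a form of the correct signature on the kernel of $\mathrm{L}^{d-2q+1}$. The strict submodularity of $c$ enters essentially at this point: the inequality $c_{I_1}+c_{I_2} > c_{I_1\cap I_2}+c_{I_1\cup I_2}$ places $\mathrm{L}(c)$ in the interior of an appropriate ``ample cone'' at every intermediate stage, so that the inductive hypothesis applies directly rather than only after a limiting argument that could collapse positive-definiteness to positive-semidefiniteness. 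Once preservation under a single flip is in hand, iterating from the empty filter up to the full filter would close the induction and deliver both (1) and (2) simultaneously.
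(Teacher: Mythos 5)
This theorem is not proved in the paper; it is quoted verbatim as \cite[Theorem 1.4]{AHK}, with the degree map described from \cite[Section 5.3]{AHK}, and the text moves directly to its applications. So the honest comparison is between your sketch and the Adiprasito--Huh--Katz argument itself, and at that level your outline is faithful: the AHK proof does establish Poincar\'e duality, hard Lefschetz, and the Hodge--Riemann relations as a single inductive package; the induction is indexed by order filters $\mathscr{P}$ in the lattice of proper flats, with an intermediate graded algebra $A(\overline{\mathrm{M}},\mathscr{P})$ interpolating between a trivial base case and the full Chow ring; each step is a ``matroidal flip'' adjoining one flat $\overline{F}$ to the filter; and the resulting decomposition of the new ring in terms of the old one together with Chow rings of the localization $\overline{\mathrm{M}}_{\overline{F}}$ and contraction $\overline{\mathrm{M}}^{\overline{F}}$ is exactly what lets the inductive hypothesis on smaller matroids enter. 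Your observation that strict submodularity serves to place $\mathrm{L}(c)$ in the interior of the relevant ample cone, avoiding a limiting argument that would only yield semi-definiteness, is also the correct role of that hypothesis.

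Where the sketch stops short of a proof, and where the real difficulty lies, is precisely the step you flag as ``the hard part.'' The decomposition under a flip is not a direct sum of subalgebras compatible with the Poincar\'e pairing in any obvious way; making it so requires constructing explicit pullback and pushforward (Gysin) maps between the intermediate rings, proving that the pushforward is a degree-preserving isometry up to sign onto its image, and then running a rather delicate ``local Hodge--Riemann implies global Hodge--Riemann'' argument in which hard Lefschetz one degree down is used to control the signature of the pairing on the kernel of $\mathrm{L}^{d-2q+1}$. None of this is automatic from ``cohomology of a blow-up'' intuition, because for a general matroid there is no underlying variety to appeal to --- that absence is the whole point. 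A second small correction: the flips in the induction \emph{add} flats to the order filter rather than remove them, and the base case is not literally the zero ring but the Chow ring attached to the empty filter, which is a concrete small ring one verifies by hand. If you want to turn this outline into a proof you would essentially have to reconstruct Sections 6--8 of \cite{AHK}; for the purposes of this paper, treating the theorem as a cited black box, as the authors do, is the right move.
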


Theorems  \ref{thm:main1} and \ref{thm:main2}, as well as other applications of the Hodge-Riemann relations in combinatorics surveyed in \cite{Huh},  only use the special case $q\le 1$.
 It will be interesting to find applications of the Hodge--Riemann relations for $q > 1$.

\section{Proof of Theorem \ref{thm:main1}}\label{sec:bound}

Let $\mathrm{M}$ be a rank $d$ loopless matroid on $E$. 
Let $\overline{\mathrm{M}}$ be the matroid on $\overline{E}$ obtained from $\mathrm{M}$ by adding $0$ as a coloop,
the direct sum of $\mathrm{M}$ and the rank $1$ matroid on $\{0\}$.
For every $e$ in $E$, we define an element %$y_e$ of $A^1(\overline{\mathrm{M}})$ by 
\[
y_e=\sum_{0 \in \overline{F}, e \notin \overline{F}} x_{\overline{F}},
\]
where the sum is over all flats $\overline{F}$ of $\overline{\mathrm{M}}$ that contain $0$ and do not contain $e$.
The linear relations in $A(\overline{\mathrm{M}})$ show that we may equivalently define $y_e$ by summing over
%\[
%y_e=\sum_{\overline{F}} x_{\overline{F}},
%\]
%where the sum is over 
all  flats $\overline{F}$ of $\overline{\mathrm{M}}$ that contain $e$ and do not contain $0$.
The quadratic relations in  $A(\overline{\mathrm{M}})$ show that, for any nonempty proper flat $\overline{F}$ of $\overline{\mathrm{M}}$ containing exactly one of $e$ and $0$,
\[
x_{\overline{F}} \cdot y_e=0.
\]
In what follows, relations of the above kind will be called \emph{$xy$-relations}.
The $xy$-relations imply that, for example,
 $y_e \cdot y_e$ is zero for any $e$ in $E$.

\begin{lemma}\label{lem:dependent}
For any dependent set $J$ of $\mathrm{M}$, we have
\[
\prod_{e \in J} y_e =0.
\]
\end{lemma}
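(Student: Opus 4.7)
The plan is to expand the product in $A(\overline{\mathrm{M}})$ using the two equivalent forms of $y_e$ asymmetrically: for one distinguished element use the summation over flats containing $0$, and for all remaining elements use the summation over flats avoiding $0$. The quadratic relations will then force every nonzero term to correspond to a chain of flats, and the circuit property of $J$ will produce an immediate contradiction.

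To make this precise, since $\mathrm{M}$ is loopless and $J$ is dependent, I fix a circuit $C \subseteq J$ and any element $e_0 \in C$. Writing $y_{e_0}$ in the first form and each $y_e$ for $e \in J \setminus \{e_0\}$ in the second form, the product expands as
$$\prod_{e \in J} y_e \;=\; \sum_{(\overline{F}_e)_{e \in J}} \prod_{e \in J} x_{\overline{F}_e},$$
where the sum is over tuples of nonempty proper flats satisfying $0 \in \overline{F}_{e_0}$, $e_0 \notin \overline{F}_{e_0}$, and, for every $e \in J \setminus \{e_0\}$, $e \in \overline{F}_e$ and $0 \notin \overline{F}_e$.

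By the quadratic relations defining $A(\overline{\mathrm{M}})$, any term whose indexing tuple is not pairwise comparable vanishes. In any surviving tuple the flat $\overline{F}_{e_0}$ contains $0$ while $\overline{F}_e$ does not, so they are distinct, and comparability forces $\overline{F}_e \subsetneq \overline{F}_{e_0}$. Hence $e \in \overline{F}_e \subsetneq \overline{F}_{e_0}$ for every $e \in J \setminus \{e_0\}$, which forces $J \setminus \{e_0\} \subseteq \overline{F}_{e_0}$.

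The circuit property now delivers the contradiction: since $C \subseteq J$ is a circuit containing $e_0$, the element $e_0$ lies in the closure of $C \setminus \{e_0\}$, hence in the closure of $J \setminus \{e_0\}$. Because $\overline{F}_{e_0}$ is a flat containing $J \setminus \{e_0\}$, it must then contain $e_0$ as well, violating the constraint $e_0 \notin \overline{F}_{e_0}$. Thus no tuple contributes to the expansion and $\prod_{e \in J} y_e = 0$. The main subtlety is the asymmetric choice of summation forms; using a single form uniformly would not produce the clean chain structure that lets the circuit hypothesis enter, and the step that requires most care is verifying that strictness of $\overline{F}_e \subsetneq \overline{F}_{e_0}$ is automatic from the differing status of $0$.
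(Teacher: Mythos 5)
Your proof is correct, and it takes a genuinely different route from the paper's. The paper argues more slickly: it first reduces to a circuit $J$, picks a maximal independent subset $I$, and shows via the $xy$-relations that replacing one element $f\in I$ by an element $g\in J\setminus I$ leaves the product unchanged, i.e.\ $y_f\prod_{e\in I\setminus f}y_e=y_g\prod_{e\in I\setminus f}y_e$; since $y_g^2=0$, the full product $\prod_{e\in J}y_e$ collapses. Your argument instead lifts each $y_e$ to a representative in $S(\overline{\mathrm{M}})$ (one factor in the ``contains $0$'' form, the rest in the ``avoids $0$'' form), expands, and observes that the quadratic relations kill every monomial whose flats do not form a chain; in a would-be surviving chain every $\overline{F}_e$ with $e\neq e_0$ must sit strictly below $\overline{F}_{e_0}$ because $0\in\overline{F}_{e_0}\setminus\overline{F}_e$, forcing $J\setminus\{e_0\}\subseteq\overline{F}_{e_0}$, and then $e_0\in\mathrm{cl}(C\setminus\{e_0\})\subseteq\overline{F}_{e_0}$ contradicts $e_0\notin\overline{F}_{e_0}$. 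Both proofs are short; the paper's is more in the spirit of manipulating the $y_e$ as elements of the Chow ring and reuses the later exchange-style reasoning, whereas yours is more combinatorially explicit about which monomials survive and makes the role of the circuit axiom transparent. One small remark: your reduction to a circuit $C\subseteq J$ is not logically necessary for your argument --- unlike the paper's proof, which genuinely works with the circuit $J$ --- since you only use that some element $e_0\in J$ lies in $\mathrm{cl}(J\setminus\{e_0\})$, which holds for any dependent set; but invoking a circuit is certainly a clean way to exhibit such an $e_0$.
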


\begin{proof}
We may suppose that $J$ is a circuit of $\mathrm{M}$.
Choose a maximal independent set $I$ of $\mathrm{M}$ in $J$,
 an element $f$ in $I$, and an element $g$ in $J \setminus I$.
Since $(I \setminus f) \cup g$ is a basis of $J$,
the set of flats of $\overline{\mathrm{M}}$ containing $(I \setminus f) \cup 0$ and not containing $f$ is equal to the set 
%\[
%0 \in \overline{F}, \ \ f \notin \overline{F}, \ \ \text{and} \ \ B\setminus f \subseteq \overline{F},
%\]
of flats  of $\overline{\mathrm{M}}$ containing $(I \setminus f) \cup 0$ and not containing $g$.
%\[
%0 \in \overline{F}, \ \  g \notin \overline{F}, \ \ \text{and} \ \ B\setminus f \subseteq \overline{F},
%\]
Therefore, by the $xy$-relations, we have
\[
\prod_{e \in I} y_e=y_{f}\prod_{e \in I \setminus f} y_e=y_{g}\prod_{e \in I \setminus f} y_e.
\]
Since the square of $y_g$ is zero, this gives $\prod_{e \in J} y_e=\prod_{e \in I} y_e\prod_{e \in J \setminus I} y_e =0$.
\end{proof}

\begin{lemma}\label{lem:basisdegree}
For any $d$-element subset $B$ of $E$, we have
\[
\text{deg}\Bigg(\prod_{e \in B}y_{e} \Bigg)=\left\{\begin{array}{ll} 
1&\text{if $B$ is a basis of $\mathrm{M}$,}\\
0&\text{if $B$ is not a basis of $\mathrm{M}$.}
\end{array}\right.
\]
\end{lemma}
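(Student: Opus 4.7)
The claim splits on whether $B$ is a basis. If $B$ is not a basis, then $|B| = d = \mathrm{rk}(\mathrm{M})$ forces $B$ to be dependent, and Lemma \ref{lem:dependent} gives $\prod_{e\in B} y_e = 0$, so $\mathrm{deg} = 0$.

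When $B = \{e_1,\ldots,e_d\}$ is a basis, $\{0\} \cup B$ is independent of rank $d+1$ in $\overline{\mathrm{M}}$ since $0$ is a coloop, so the flats
\[
\overline{G}^{(k)} \;=\; \mathrm{cl}_{\overline{\mathrm{M}}}\bigl(\{0,e_{k+1},\ldots,e_d\}\bigr),\qquad k = 1,\ldots,d,
\]
have ranks $d, d-1, \ldots, 1$ and form a strict chain $\overline{G}^{(1)} \supsetneq \overline{G}^{(2)} \supsetneq \cdots \supsetneq \overline{G}^{(d)}$ of $d$ nonempty proper flats of $\overline{\mathrm{M}}$. My plan is to prove by iterative ``peeling'' the identity
\[
\prod_{e\in B} y_e \;=\; \Bigl(\prod_{j=1}^{k} x_{\overline{G}^{(j)}}\Bigr)\,\prod_{e\in B \setminus \{e_1,\ldots,e_k\}} y_e \qquad \text{in } A(\overline{\mathrm{M}})
\]
for each $k = 1,\ldots,d$. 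Taking $k = d$ yields $\prod_{e\in B} y_e = \prod_{j=1}^{d} x_{\overline{G}^{(j)}}$, and the defining property of $\mathrm{deg}$ on maximal chains then gives the value $1$.

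To advance from stage $k-1$ to stage $k$, I expand $y_{e_k} = \sum_{0\in\overline{F},\,e_k\notin\overline{F}} x_{\overline{F}}$ and determine which $\overline{F}$ survive multiplication by the accumulated factor $\bigl(\prod_{j<k} x_{\overline{G}^{(j)}}\bigr)\cdot\prod_{e\in B\setminus\{e_1,\ldots,e_k\}} y_e$. Three mechanisms trim the sum: the $xy$-relations kill a term unless $\overline{F}$ contains every $e \in B \setminus \{e_1,\ldots,e_k\}$, so $\overline{F} \supseteq \overline{G}^{(k)}$; the quadratic relations require $\overline{F}$ to be comparable with each $\overline{G}^{(j)}$ for $j < k$; and the summation imposes $e_k \notin \overline{F}$, while $e_k$ lies in $\overline{G}^{(j)}$ for every $j < k$. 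The main technical point is to check, using that consecutive flats in the chain differ in rank by exactly $1$, that these three constraints jointly single out $\overline{F} = \overline{G}^{(k)}$ --- this rank-bookkeeping is the step most prone to off-by-one confusion and is where I would concentrate my care. Once it is verified, $d$ iterations complete the reduction, and the conclusion follows from the normalization of $\mathrm{deg}$.
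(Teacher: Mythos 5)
Your proposal is correct and follows essentially the same strategy as the paper's proof: dispose of the non-basis case via Lemma~\ref{lem:dependent}, and for the basis case iteratively replace each $y_e$ by a single $x_{\overline{F}}$, using the $xy$-relations together with the quadratic relations and a rank count to isolate the unique surviving flat at each step, until a maximal chain of flats is reached. The only cosmetic difference is the direction of peeling: you start with $y_{e_1}$ and build a decreasing chain $\overline{G}^{(1)}\supsetneq\cdots\supsetneq\overline{G}^{(d)}$, whereas the paper starts with $y_{e_d}$ and builds the increasing chain $\overline{F}_1\subsetneq\cdots\subsetneq\overline{F}_d$ of closures of initial segments; relabeling $e_k\mapsto e_{d+1-k}$ turns one into the other. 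Your rank-bookkeeping step is exactly right: at stage $k$ the constraints force $\overline{G}^{(k)}\subseteq\overline{F}\subsetneq\overline{G}^{(k-1)}$, and since these two flats have consecutive ranks $d-k+1$ and $d-k+2$, the only possibility is $\overline{F}=\overline{G}^{(k)}$.
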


\begin{proof}
Without loss of generality, we may suppose that $B=\{1,\ldots,d\}$. We consider
 the flats
\[
 \overline{F}_k=\text{the smallest flat of $\overline{\mathrm{M}}$ containing  $0,1,\ldots,k-1$},\ \ \text{for $k=1,\ldots,d+1$}.
\] 
If $B$ is a basis of $\mathrm{M}$, 
 $\overline{F}_{k}$ is the only flat of $\overline{\mathrm{M}}$ containing $0,1,\ldots,k-1$, not containing $k$, and  comparable to $\overline{F}_{k+1}$.
Thus the $xy$-relations imply that
\begin{align*}
y_{1}\cdots y_{{d-2}}y_{{d-1}}y_{d}&=\Big(y_{1}\cdots y_{{d-2}}y_{{d-1}}\Big)x_{\overline{F}_d}\\
&=\Big(y_{1} \cdots y_{{d-2}}\Big)x_{\overline{F}_{d-1}}x_{\overline{F}_d}=\cdots =x_{\overline{F}_1} \cdots x_{\overline{F}_{d-2}}x_{\overline{F}_{d-1}}x_{\overline{F}_d}.
\end{align*}
%This completes the proof of Lemma \ref{lem:basisdegree}.
If $B$ is not a basis of $\mathrm{M}$, it contains a dependent set of $\mathrm{M}$, and  $\prod_{e \in B} y_e=0$ by Lemma \ref{lem:dependent}.
\end{proof}

\begin{lemma}\label{lem:c(e)}
Let $e$ be an element of $E$, and let $c(e)$ be the real-valued function on $2^{\overline{E}}$ defined by
\[
c(e)_I=\left\{\begin{array}{ll} 
1&\text{if $I$ contains $0$ and $I$ does not contain $e$,}\\
0&\text{if $I$ contains $e$ or $I$ does not contain $0$.}
\end{array}\right.
\]
Then $c(e)_\varnothing=0$, $c(e)_{\overline{E}}=0$, and, for any subsets $I_1,I_2$ of $\overline{E}$, we have
\[
c(e)_{I_1}+c(e)_{I_2} \ge c(e)_{I_1 \cap I_2} +c(e)_{I_1 \cup I_2}.
\]
\end{lemma}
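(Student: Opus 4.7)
The boundary values are immediate from the definition: $\varnothing$ does not contain $0$ and $\overline{E}$ contains $e$, so $c(e)_\varnothing = c(e)_{\overline{E}} = 0$.

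For the submodular inequality, my plan is to decompose $c(e)$ into two pieces with known behavior. Observe that
\[
c(e)_I = \mathbf{1}[0 \in I]\bigl(1 - \mathbf{1}[e \in I]\bigr) = f_I - g_I,
\]
where $f_I := \mathbf{1}[0 \in I]$ and $g_I := \mathbf{1}[\{0,e\} \subseteq I]$. It therefore suffices to prove that $f$ is modular and $g$ is supermodular, since the difference of a modular and a supermodular function is submodular.

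Modularity of $f$ is the identity $\mathbf{1}[0 \in I_1] + \mathbf{1}[0 \in I_2] = \mathbf{1}[0 \in I_1 \cap I_2] + \mathbf{1}[0 \in I_1 \cup I_2]$, which holds pointwise in the elements of $\overline{E}$. For supermodularity of $g$, I would verify the inequality $g_{I_1} + g_{I_2} \le g_{I_1 \cap I_2} + g_{I_1 \cup I_2}$ by splitting into cases according to how many of the sets $I_1, I_2$ contain $\{0, e\}$: if both do, both sides equal $2$; if exactly one does, both sides equal $1$; and if neither does, the left side is $0$ while the right side is either $0$ or $1$ depending on whether $\{0,e\}$ is split across the two sets. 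In every case the inequality holds.

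There is no real obstacle here; the only point requiring care is the sign convention (that $c(e) = f - g$ with $f$ modular and $g$ supermodular gives a submodular $c(e)$), which is why I would state the decomposition explicitly before invoking it. An equivalent, fully direct route would be a four-case analysis on the pair of indicators $(\mathbf{1}[0 \in I_k], \mathbf{1}[e \in I_k])$ for $k = 1, 2$, checking the sixteen possibilities; the decomposition above simply packages this case check more efficiently.
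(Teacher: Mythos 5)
Your proof is correct. The paper proves the submodular inequality by one direct computation: it evaluates the difference $c(e)_{I_1}+c(e)_{I_2} - c(e)_{I_1 \cap I_2} -c(e)_{I_1 \cup I_2}$ and shows it equals $1$ when $0$ and $e$ are split across $I_1 \setminus I_2$ and $I_2 \setminus I_1$, and $0$ otherwise. You instead write $c(e) = f - g$ with $f_I = \mathbf{1}[0 \in I]$ modular and $g_I = \mathbf{1}[\{0,e\} \subseteq I]$ supermodular, then verify each piece separately. Both are elementary finite case checks and land in the same place (and, as you note, your decomposition is really a repackaging of the same four-way split on whether $0$ and $e$ lie in $I_1$ and/or $I_2$). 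What your route buys is a cleaner conceptual statement: $g$ is the indicator that $I$ lies above the join of two atoms in the Boolean lattice, and such indicators are always supermodular, so the lemma is a special case of a general principle rather than an ad hoc verification. What the paper's route buys is that it records exactly when the inequality is strict, which makes transparent that $c(e)$ is submodular but not \emph{strictly} submodular---the reason Theorem~\ref{HLHR} is applied to $\mathrm{L}_{ij}+\epsilon\mathrm{L}$ with a strictly submodular perturbation $\mathrm{L}$ and a limiting argument, rather than to $\mathrm{L}_{ij}$ directly. One small inaccuracy in your write-up: in the case where exactly one of $I_1, I_2$ contains $\{0,e\}$, you claim ``both sides equal $1$'' as if it were immediate; it does hold, but only after noting that the other set must omit at least one of $0,e$, forcing $g_{I_1\cap I_2}=0$. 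This is a presentational gap, not a mathematical one.
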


The submodular inequality of Lemma \ref{lem:c(e)} is straightforward to check. In fact, we have
\[
c(e)_{I_1}+c(e)_{I_2} - c(e)_{I_1 \cap I_2} -c(e)_{I_1 \cup I_2}
=\left\{\begin{array}{ll} 
1&\text{if $0$ is in $I_1 \setminus I_2$ and $e$ is in $I_2 \setminus I_1$,}\\
1&\text{if $0$ is in $I_2 \setminus I_1$ and $e$ is in $I_1 \setminus I_2$,}\\
0&\text{if otherwise.}
\end{array}\right.
\]

%\begin{proof}[Proof of Theorem \ref{thm:main1}]
We are ready to prove Theorem \ref{thm:main1}.
The equality holds in Theorem \ref{thm:main1} when $d=1$. 
Suppose from now on that $d \ge 2$.
Let $w=(w_e)$ be the given set of positive weights on $E$.
For distinct elements $i$ and $j$ in $E$, define
\[
\mathrm{L}_{ij}=\mathrm{L}_{ij}(w)=\sum_{e \neq i, e \neq j} w_ey_e,
\]
where the sum is over all elements of $E$ other than $i$ and $j$.
Lemma \ref{lem:basisdegree} shows that
\[
\text{deg} \big(\mathrm{L}_{ij}^d\big)=d! \Bigg( \sum_{B \in \mathcal{B}^{ij}} \prod_{e \in B} w_e\Bigg),
\]
where $\mathcal{B}^{ij}$ is the set of bases of $\mathrm{M}$ not containing $i$ and not containing $j$. Similarly,
\[
\text{deg} \big(y_i\mathrm{L}_{ij}^{d-1}\big)=(d-1)! \Bigg( \sum_{B \in \mathcal{B}^{j}_i} \prod_{e \in B} w_e\Bigg),
\]
where $\mathcal{B}^{j}_i$ is the set of bases of $\mathrm{M}$ containing $i$ and not containing $j$, and
\[
\text{deg} \big(y_iy_j\mathrm{L}_{ij}^{d-2}\big)=(d-2)! \Bigg( \sum_{B \in \mathcal{B}_{ij}} \prod_{e \in B} w_e\Bigg).
\]
where $\mathcal{B}_{ij}$ is the set of bases of $\mathrm{M}$ containing $i$ and  containing $j$.
Theorem \ref{thm:main1} obviously holds if $\mathcal{B}^{ij}$ or $\mathcal{B}_{ij}$ is empty. 
We suppose from now on that $\mathcal{B}^{ij}$ and $\mathcal{B}_{ij}$ are nonempty. %Equivalently, we suppose that both positive. 

Let $\mathrm{L}$ be any element of $A^1(\overline{\mathrm{M}})$ attached to a strictly submodular function on $2^{\overline{E}}$.
By Lemma \ref{lem:c(e)}, Theorem \ref{HLHR} applies to the element
$
\mathrm{L}_{ij}+\epsilon\mathrm{L}$  for any positive real number $\epsilon$.
By the Hodge-Riemann relations for $q\le 1$,
 any matrix representing the symmetric bilinear form
\[
A^1(\overline{\mathrm{M}})\times A^1(\overline{\mathrm{M}}) \longrightarrow \mathbb{R}, \qquad (\eta_1,\eta_2) \longmapsto \text{deg}\Big(\eta_1\eta_2\big(\mathrm{L}_{ij}+\epsilon\mathrm{L}\big)^{d-2}\Big)
\]
must have exactly one positive eigenvalue.
Thus, by continuity, any matrix representing the symmetric bilinear form
\[
A^1(\overline{\mathrm{M}})\times A^1(\overline{\mathrm{M}}) \longrightarrow \mathbb{R}, \qquad (a_1,a_2) \longmapsto \text{deg}\Big(\eta_1\eta_2 \mathrm{L}_{ij}^{d-2}\Big)
\]
has at most one positive eigenvalue.
Now consider the symmetric matrix
\[
\mathrm{H}_{ij}=\left[\begin{array}{ccc}
0 & \text{deg} \big(\hspace{0.3mm}y_i\hspace{0.3mm}y_j\hspace{0.3mm}\mathrm{L}_{ij}^{d-2}\big) & \text{deg} \big(\hspace{0.3mm}y_i\hspace{0.3mm}\mathrm{L}_{ij}\mathrm{L}_{ij}^{d-2}\big)\\
\text{deg} \big(\hspace{0.3mm}y_i\hspace{0.3mm}y_j\hspace{0.3mm}\mathrm{L}_{ij}^{d-2}\big) & 0 & \text{deg} \big(\hspace{0.3mm}y_j\hspace{0.3mm}\mathrm{L}_{ij}\mathrm{L}_{ij}^{d-2}\big)\\
\text{deg} \big(\hspace{0.3mm}y_i\hspace{0.3mm}\mathrm{L}_{ij}\mathrm{L}_{ij}^{d-2}\big) & \text{deg} \big(\hspace{0.3mm}y_j\hspace{0.3mm}\mathrm{L}_{ij}\mathrm{L}_{ij}^{d-2}\big) & \text{deg} \big(\mathrm{L}_{ij}\mathrm{L}_{ij}\mathrm{L}_{ij}^{d-2}\big)
\end{array}\right].
\]
Cauchy's eigenvalue interlacing theorem shows that $\mathrm{H}_{ij}$ has at most one positive eigenvalue as well.
On the other hand, $\mathrm{H}_{ij}$ has at least one positive eigenvalue, because its lower-right diagonal entry is positive.
A straightforward computation reveals that the determinant of $\mathrm{H}_{ij}$ is a positive multiple of
\[
%d!(d-2)!\text{deg} \big(y_iy_j\mathrm{L}_{ij}^{d-2}\big)\Bigg[
2\Bigg(1-\frac{1}{d}\Bigg)\Bigg(\sum_{B \in \mathcal{B}_i^j} \prod_{e \in B^{\phantom{d}}} w_e\Bigg)\Bigg(\sum_{B \in \mathcal{B}_j^i} \prod_{e \in B} w_e\Bigg)-\Bigg(\sum_{B \in \mathcal{B}^{ij}} \prod_{e \in B} w_e\Bigg)\Bigg(\sum_{B \in \mathcal{B}_{ij}} \prod_{e \in B} w_e\Bigg).
%\Bigg]
\]
The determinant must be nonnegative by the condition on the eigenvalues of $\mathrm{H}_{ij}$, and hence
\[
 \mathbb{P}(i \in \mathrm{B}, j \in \mathrm{B})\ \mathbb{P}(i \notin \mathrm{B}, j \notin \mathrm{B}) \le 
2 \Bigg(1-\frac{1}{d}\Bigg)\  \mathbb{P}(i \in \mathrm{B}, j \notin \mathrm{B})\ \mathbb{P}(i \notin \mathrm{B}, j \in \mathrm{B}).
\]
This completes the proof of Theorem \ref{thm:main1}.
%\end{proof}

\section{Proof of Theorem \ref{thm:main2}}\label{sec:Mason}

%We first review the two standard constructions used to connect Theorem \ref{thm:main2} and Corollary \ref{cor:Mason}.
%Let $\mathrm{N}$ be a rank $d$ matroid on a finite set $S$, and let $i$ be an element not in $S$.
%Recall that the \emph{truncation} $\text{tr}(\mathrm{N})$ of $\mathrm{N}$ is the matroid on $S$  whose independent sets are the independent sets of $\mathrm{N}$ of cardinality less than $d$.
%The  \emph{free extension} $\mathrm{N}+i$ of $\mathrm{N}$ is the matroid on $S \cup i$ satisfying
%\[
%(\mathrm{N}+i) \setminus i=\mathrm{N} \ \ \text{and} \ \ (\mathrm{N}+i) / i=\text{tr}(\mathrm{N}),
%\]
%where $\text{tr}(\mathrm{N})$ is the truncation of $\mathrm{N}$ \cite[Chapter 7]{Oxley}.
%Note that, for distinct   $i$ and $j$ not in $S$, 
%\[
%(\mathrm{N}+i)+j=(\mathrm{N}+j)+i.
%\]

Let $i$ and $j$ be distinct free elements in a rank $d$ matroid $\mathrm{Y}$,
and let $\mathrm{Z}$ be the deletion of $i$ and $j$ from $\mathrm{Y}$. 
We prove Theorem \ref{thm:main2} for $i$ and $j$ in $\mathrm{Y}$.
When $d=1$, no basis of $\mathrm{Y}$ contains both $i$ and $j$, and
 the equality holds in Theorem \ref{thm:main2}.
 Suppose from now on that $d \ge 2$.

Write $\mathcal{B}_{ij}$, $\mathcal{B}^i_{j}$, $\mathcal{B}^j_{i}$, $\mathcal{B}^{ij}$ for the set of bases  containing and/or not containing $i,j$,
and $\mathcal{I}_m$ for the collection of $m$-element independent sets.
Since $i$ and $j$ are free, we have natural bijections
\[
\mathcal{I}_{d}(\mathrm{Z}) \simeq \mathcal{B}^{ij}(\mathrm{Y}), \quad
\mathcal{I}_{d-1}(\mathrm{Z}) \simeq \mathcal{B}^{j}_i(\mathrm{Y}) \simeq \mathcal{B}^{i}_j(\mathrm{Y}), \quad
\mathcal{I}_{d-2}(\mathrm{Z}) \simeq \mathcal{B}_{ij}(\mathrm{Y}).
\]
%Note that the rank of $\mathrm{N}$ is either $d-1$ or $d$.
If the rank of $\mathrm{Z}$ is less than $d$, Theorem \ref{thm:main2} clearly holds, as the left-hand side of the inequality is zero.
If the rank  of $\mathrm{Z}$ is $d$, Theorem \ref{thm:main2} follows from the following version of Corollary \ref{cor:Mason} applied to $\mathrm{Z}$.
%If $i$ is a coloop in $\mathrm{M} \setminus j$, or equivalently
%if $j$ is a coloop in $\mathrm{M} \setminus i$, then
%$\mathrm{M}$ is the free extension 
%\[
%\mathrm{M}=\Big(\mathrm{N} \oplus i \Big)+j=\Big(\mathrm{N} \oplus j \Big) +i.
%\]
%In this case,   any basis of $\mathrm{M}$ must contain either $i$ or $j$, and Theorem \ref{thm:main2} follows.
%If $i$ is not a coloop in $\mathrm{M} \setminus j$, 
%or equivalently if $j$ is not a coloop in $\mathrm{M} \setminus i$, 
%then
%$\mathrm{M}$ is the iterated free extension
%\[
%\mathrm{M}=\Big(\mathrm{N} + i \Big)+j=\Big(\mathrm{N} + j \Big) +i.
%\]
%Theorem \ref{thm:main2} in this case is equivalent to
% the following version of Corollary \ref{cor:Mason}.
 
 \begin{proposition}\label{prop:equivalent}
For any matroid $\mathrm{M}$ of rank $d \ge 2$ and any set of positive weights $w=(w_e)$,
\[
\Bigg(\sum_{I \in \mathcal{I}_{d-1}}\prod_{e \in I} w_e\Bigg)^2 \ge \frac{d}{d-1}\Bigg(\sum_{I \in \mathcal{I}_{d-2}}\prod_{e \in I} w_e\Bigg)\Bigg(\sum_{I \in \mathcal{I}_{d}}\prod_{e \in I} w_e\Bigg),
\]
where $\mathcal{I}_m=\mathcal{I}_m(\mathrm{M})$ is the collection of $m$-element independent sets of $\mathrm{M}$.
\end{proposition}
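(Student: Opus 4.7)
The plan is to prove Proposition~\ref{prop:equivalent} by adapting the Hodge-theoretic argument of Section~\ref{sec:bound} to the setting where the two auxiliary elements are free, and then sharpening the constant obtained from Theorem~\ref{thm:main1} by a factor of $2$.

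First, I would construct the rank-$d$ matroid $\mathrm{Y}$ on $E \cup \{i, j\}$ by freely adjoining two new elements $i$ and $j$ to $\mathrm{M}$. Then $\mathrm{Y} \setminus \{i,j\} = \mathrm{M}$, the elements $i, j$ are free in $\mathrm{Y}$, and the bases of $\mathrm{Y}$ partition, according to whether they contain $i$ and $j$, into subsets naturally bijective with $\mathcal{I}_d(\mathrm{M})$, $\mathcal{I}_{d-1}(\mathrm{M})$ (twice), and $\mathcal{I}_{d-2}(\mathrm{M})$. Extending the weights by $w_i = w_j = 1$ (valid by homogeneity of the inequality) and working in $A(\overline{\mathrm{Y}})$ with $\mathrm{L} = \sum_{e \in E} w_e y_e$, Lemma~\ref{lem:basisdegree} gives
\[
\text{deg}(\mathrm{L}^d) = d!\, f_d(\mathrm{M}),\quad \text{deg}(y_i \mathrm{L}^{d-1}) = \text{deg}(y_j \mathrm{L}^{d-1}) = (d-1)!\, f_{d-1}(\mathrm{M}),
\]
\[
\text{deg}(y_i y_j \mathrm{L}^{d-2}) = (d-2)!\, f_{d-2}(\mathrm{M}),
\]
where $f_k(\mathrm{M}) = \sum_{I \in \mathcal{I}_k(\mathrm{M})}\prod_{e \in I}w_e$. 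Under these identifications, Proposition~\ref{prop:equivalent} becomes the Chow-ring inequality
\[
\text{deg}(y_i y_j \mathrm{L}^{d-2}) \cdot \text{deg}(\mathrm{L}^d) \ \le\ \text{deg}(y_i \mathrm{L}^{d-1}) \cdot \text{deg}(y_j \mathrm{L}^{d-1}).
\]

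Next, I would apply the Hodge-Riemann relations from Theorem~\ref{HLHR}(2) at $q = 1$. After perturbing $\mathrm{L}$ by a strictly submodular class (Lemma~\ref{lem:c(e)}) and passing to a limit, as in Section~\ref{sec:bound}, the bilinear form $(\eta_1, \eta_2) \mapsto \text{deg}(\eta_1 \eta_2 \mathrm{L}^{d-2})$ on $A^1(\overline{\mathrm{Y}})$ has exactly one positive eigenvalue. Running the $3 \times 3$ determinant argument from the proof of Theorem~\ref{thm:main1} on the span of $y_i$, $y_j$, and $\mathrm{L}$ immediately reproduces Theorem~\ref{thm:main1} applied to $\mathrm{Y}$, namely $\text{deg}(y_i y_j \mathrm{L}^{d-2}) \cdot \text{deg}(\mathrm{L}^d) \le 2\,\text{deg}(y_i \mathrm{L}^{d-1}) \cdot \text{deg}(y_j \mathrm{L}^{d-1})$, which is off by exactly a factor of $2$ from the target.

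The main obstacle is saving this factor of $2$, which must genuinely use the freeness of $i$ and $j$, since the constant $2(1-1/d)$ in Theorem~\ref{thm:main1} is essentially best possible for arbitrary pairs. I expect the decisive ingredient to be a Chow-ring identity in $A(\overline{\mathrm{Y}})$ that is special to free elements --- for example, an additional linear relation among $y_i$, $y_j$, and $\mathrm{L}$ in $A^1(\overline{\mathrm{Y}})$, or a rigid expression for $y_i y_j \in A^2(\overline{\mathrm{Y}})$ --- arising from the constrained structure of flats of $\overline{\mathrm{Y}}$ that contain or avoid the free elements. Once such an identity is in hand, I expect the matrix argument to collapse to a $2 \times 2$ analogue (rather than the full $3 \times 3$ used for Theorem~\ref{thm:main1}) that yields the sharp bound. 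Identifying this identity and verifying that it produces exactly the factor-of-$2$ saving is the central technical challenge of the proof.
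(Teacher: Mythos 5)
Your setup correctly mirrors the paper's machinery, and you rightly observe that naively running the $3\times3$ determinant argument on $y_i, y_j, \mathrm{L}$ in $A(\overline{\mathrm{Y}})$ only reproduces the factor $2(1-1/d)$ of Theorem~\ref{thm:main1}. But you have not found the ingredient that saves the factor of $2$, and the direction you are guessing will not produce it. The entries $y_i$ and $y_j$ satisfy $y_i^2 = y_j^2 = 0$ by the $xy$-relations, so the diagonal entries $\mathrm{deg}(y_i^2\mathrm{L}^{d-2})$ and $\mathrm{deg}(y_j^2\mathrm{L}^{d-2})$ of your Gram matrix vanish identically; this is what produces the $3\times3$ structure and the coefficient $2$ in its determinant $2abc - a^2 D$. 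No extra linear relation among $y_i$, $y_j$, and $\mathrm{L}$ in $A^1(\overline{\mathrm{Y}})$ will fix this (for free $i$, $y_i$ is genuinely distinct from $\alpha$ and from $\mathrm{L}$), nor will replacing $y_i, y_j$ by combinations such as $y_i+y_j$, whose square $2y_iy_j$ still carries the same coefficient $2$ that you are trying to remove.

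The paper's proof is genuinely different and does not involve any free extension at all. It works directly in $A(\overline{\mathrm{M}})$ and introduces a new class $\alpha = \sum_{0\in\overline{F}} x_{\overline{F}}$, the sum over all proper flats of $\overline{\mathrm{M}}$ containing $0$. Unlike the $y_e$'s, the element $\alpha$ does not square to zero. The crucial new input is Lemma~\ref{lem:alpha}, an extension of Lemma~\ref{lem:basisdegree}, which computes $\mathrm{deg}(\alpha^{d-m}\prod_{e\in I}y_e)$ for subsets $I$ of arbitrary size $m$ and thereby gives $\mathrm{deg}(\alpha^{d-m}\mathrm{L}_0^m) = m!\sum_{I\in\mathcal I_m}\prod_{e\in I}w_e$. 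Proving this lemma requires a nontrivial inductive argument using the linear and quadratic relations in the Chow ring. With this in hand, the Hodge--Riemann relations are applied to the genuine $2\times2$ Gram matrix of $\alpha$ and $\mathrm{L}_0$, whose upper-left entry $\mathrm{deg}(\alpha^2\mathrm{L}_0^{d-2}) = (d-2)!\,f_{d-2}$ is nonzero; its determinant being nonpositive yields the sharp constant $d/(d-1)$ directly. In short: the gap in your proposal is that you are looking for a relation that would collapse the $3\times3$ matrix, but the actual mechanism is to avoid $y_i, y_j$ entirely and replace them by the single non-nilpotent class $\alpha$, which is a different object requiring the new Lemma~\ref{lem:alpha}.
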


The proof of Proposition \ref{prop:equivalent} is similar to that of Theorem \ref{thm:main1}. 
We define an element 
\[
\alpha=\sum_{0 \in \overline{F}} x_{\overline{F}},
\]
where the sum is over all proper flats $\overline{F}$ of $\overline{\mathrm{M}}$ containing $0$.
The linear relations in  $A(\overline{\mathrm{M}})$ show that we may equivalently define $\alpha$ by summing over
%\[
%\alpha=\sum_{\overline{F}} x_{\overline{F}},
%\]
%where the sum is over 
all  flats $\overline{F}$ of $\overline{\mathrm{M}}$ containing $e$, for any $e$ in $E$.
The main ingredient of the proof is the following extension of Lemma \ref{lem:basisdegree}.

\begin{lemma}\label{lem:alpha}
For any $m$-element subset $I$ of $E$, we have
\[
\text{deg}\Bigg(\alpha^{d-m}\prod_{e \in I} y_e\Bigg)=\left\{\begin{array}{ll} 
1&\text{if $I$ is independent in $\mathrm{M}$,}\\
0&\text{if $I$ is dependent in $\mathrm{M}$.}
\end{array}\right.
\]
\end{lemma}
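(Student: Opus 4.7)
The plan is to argue by induction on $d-m$. When $I$ is dependent, Lemma~\ref{lem:dependent} gives $\prod_{e \in I} y_e = 0$ and the degree is zero, so I focus on independent $I$, where the claim is that the degree equals $1$; the base case $d-m=0$ is precisely Lemma~\ref{lem:basisdegree}.

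For the inductive step, since $m<d$, I may pick $f \in E\setminus I$ with $I \cup \{f\}$ still independent. Combining the two equivalent expressions of $y_f$ afforded by the linear relations in $A(\overline{\mathrm{M}})$ gives the decomposition
\[
\alpha \;=\; y_f \;+\; \sum_{\{0,f\} \subseteq \overline{F}} x_{\overline{F}} \qquad \text{in } A^1(\overline{\mathrm{M}}),
\]
where the sum runs over proper nonempty flats $\overline{F}$ of $\overline{\mathrm{M}}$ containing both $0$ and $f$. Substituting into $\alpha^{d-m}\prod_{e \in I} y_e$ splits the left-hand side into two pieces; the first, $\alpha^{d-m-1}\prod_{e \in I \cup \{f\}} y_e$, has $\text{deg}$ equal to $1$ by the inductive hypothesis applied to the independent set $I\cup\{f\}$ of size $m+1$. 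It therefore suffices to show
\[
\text{deg}\Bigg(\alpha^{d-m-1}(\alpha-y_f)\prod_{e \in I} y_e\Bigg) \;=\; 0.
\]

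Applying the $xy$-relations to each factor $y_e$ with $e \in I$, only flats $\overline{F}$ containing every $e \in I$ can contribute; combined with $\{0,f\} \subseteq \overline{F}$, this restricts the sum to flats $\overline{F} \supseteq \overline{G}$, where $\overline{G}$ is the closure of $I \cup \{0,f\}$ in $\overline{\mathrm{M}}$, a flat of rank $m+2$. In the boundary case $m = d-1$, the flat $\overline{G}$ equals $\overline{E}$, the sum is empty, and the vanishing is immediate.

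The hard part is the remaining case $m < d - 1$. Here I would split the sum into the leading contribution $x_{\overline{G}}\prod_{e \in I} y_e$ and the tail $\sum_{\overline{F} \supsetneq \overline{G}} x_{\overline{F}}\prod_{e \in I} y_e$, and use the natural correspondence between proper nonempty flats of the contraction $\overline{\mathrm{M}}/\overline{G}$ and flats of $\overline{\mathrm{M}}$ strictly between $\overline{G}$ and $\overline{E}$ to recognize the tail as the image of the hyperplane-type class $\alpha(\overline{\mathrm{M}}/\overline{G})$. The resulting expression then reduces to a degree computation in the Chow ring of the smaller-rank matroid $\overline{\mathrm{M}}/\overline{G}$, where the total power of hyperplane-type classes exceeds the ambient rank and forces vanishing; the leading piece is dispatched similarly via an analogous factorization through $\overline{\mathrm{M}}|_{\overline{G}}$ and a secondary induction on $d$. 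The most delicate step is verifying that the correspondences between the Chow rings of $\overline{\mathrm{M}}$, $\overline{\mathrm{M}}/\overline{G}$, and $\overline{\mathrm{M}}|_{\overline{G}}$ respect the degree maps with the right normalization.
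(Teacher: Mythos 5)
Your overall strategy---induct on $d-m$ starting from Lemma~\ref{lem:basisdegree}, pick $f\in E\setminus I$ with $I\cup\{f\}$ independent, decompose $\alpha = y_f + \sum_{\{0,f\}\subseteq\overline{F}} x_{\overline{F}}$, and reduce to showing $\text{deg}\big(\alpha^{d-m-1}(\alpha - y_f)\prod_{e\in I} y_e\big)=0$---matches the paper's argument up to relabeling (the paper runs the same induction as a descending induction on $m$), and your use of the $xy$-relations to restrict the surviving flats to those containing $I\cup\{0,f\}$ is correct.

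The gap is in the final vanishing. Your plan to route the computation through the Chow rings of the restriction $\overline{\mathrm{M}}|_{\overline{G}}$ and the contraction $\overline{\mathrm{M}}/\overline{G}$ hinges on a pushforward/pullback compatibility of the degree maps with the right normalization, together with a verification that $\alpha$ and the classes $y_e$ split correctly across that decomposition; you flag this as the delicate step but do not establish it, so as written the argument does not close. Moreover, this machinery is not in the paper and is far more than is needed.

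There is a much shorter way to finish, and it is what the paper does. Fix a proper flat $\overline{F}$ containing $I\cup\{0,f\}$, extend $I\cup\{f\}$ to a basis $B$ of $\mathrm{M}$, and write $B\setminus(I\cup\{f\})=\{g_1,\ldots,g_{d-m-1}\}$. Using the linear relations, rewrite each of the remaining $d-m-1$ copies of $\alpha$ over a \emph{different} element of this set: $\alpha = \sum_{g_k\in\overline{F}_k} x_{\overline{F}_k}$. Every monomial in the resulting expansion of $x_{\overline{F}}\,\alpha^{d-m-1}$ has the form $x_{\overline{F}}x_{\overline{F}_1}\cdots x_{\overline{F}_{d-m-1}}$, and each such monomial vanishes by the quadratic relations: a nonzero product forces the flats to form a chain, and the top of the chain would contain $\{0\}\cup B$, whose closure in $\overline{\mathrm{M}}$ is all of $\overline{E}$ rather than a proper flat. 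Hence $x_{\overline{F}}\,\alpha^{d-m-1}=0$ for every such $\overline{F}$, and the vanishing you need follows with no reference to minor Chow rings.
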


\begin{proof}
We prove by descending induction on $m$. The case $m=d$ is Lemma \ref{lem:basisdegree},
and the case of dependent $I$ is Lemma \ref{lem:dependent}.
For the induction step, suppose without loss of generality that $\{1,\ldots,d\}$ is a basis of $\mathrm{M}$.
It is enough to show that
\[
\Big(y_1\cdots y_{m-1}\Big) \ y_m \ \alpha^{d-m}=\Big(y_1\cdots y_{m-1}\Big)  \ \alpha^{d-m+1}.
\]
By the $xy$-relations, the difference of the right-hand side and the left-hand side is
\[
%\Big(y_1\cdots y_{m-1}\Big) \ \Big(\alpha-y_m\Big) \ \alpha^{d-m}=
\Big(y_1\cdots y_{m-1}\Big) \ \Bigg(\sum_{\overline{G}} x_{\overline{G}}\Bigg) \ \alpha^{d-m},
\]
where the sum is over all proper flats $\overline{G}$ of $\overline{\mathrm{M}}$ containing $0,1,\ldots,m$.
For any such  $\overline{G}$, we claim  
\[
x_{\overline{G}} \ \alpha^{d-m}=0.
\]
To see this, use the linear relations in $A(\overline{\mathrm{M}})$ to write
\[
x_{\overline{G}} \ \alpha^{d-m}
=x_{\overline{G}} \ \Bigg(\sum_{\overline{F}_{m+1}} x_{\overline{F}_{m+1}}\Bigg) \ \ldots \ \Bigg(\sum_{\overline{F}_d} x_{\overline{F}_d}\Bigg),
\]
where the $k$-th sum is over all proper flats $\overline{F}_{m+k}$ of $\overline{\mathrm{M}}$ containing $m+k$.
Since $\{1,\ldots,d\}$ is a basis of $\mathrm{M}$, no proper flat of $\overline{\mathrm{M}}$ contains $\{0,1,\ldots,d\}$,
and hence the right-hand side  is zero by the quadratic relations in  $A(\overline{\mathrm{M}})$.
\end{proof}

We are ready to prove Proposition \ref{prop:equivalent}. Define another element %$\mathrm{L}_0$ of $A^1(\overline{\mathrm{M}})$ by 
\[
\mathrm{L}_0=\mathrm{L}_0(w)=\sum_{e \in E} w_ey_e, 
\]
where the sum is over all elements $e$ in $E$.
By Lemma \ref{lem:alpha}, for any nonnegative ineteger $m \le d$,
\[
\text{deg}\Big(\alpha^{d-m} \mathrm{L}_0^m\Big)=m!\Bigg(\sum_{I \in \mathcal{I}_m} \prod_{e \in I} w_e\Bigg),
\]
where $\mathcal{I}_m$ is the collection of $m$-element independent sets of $\mathrm{M}$.

Let $\mathrm{L}$ be any element of $A^1(\overline{\mathrm{M}})$ attached to a strictly submodular function on $2^{\overline{E}}$.
By Lemma \ref{lem:c(e)}, Theorem \ref{HLHR} applies to the element
$\mathrm{L}_{0}+\epsilon\mathrm{L}$  for any positive real number $\epsilon$.
By the Hodge-Riemann relations for $q\le 1$,
 any matrix representing the symmetric bilinear form
\[
A^1(\overline{\mathrm{M}})\times A^1(\overline{\mathrm{M}}) \longrightarrow \mathbb{R}, \qquad (\eta_1,\eta_2) \longmapsto \text{deg}\Big(\eta_1\eta_2\big(\mathrm{L}_{0}+\epsilon\mathrm{L}\big)^{d-2}\Big)
\]
must have exactly one positive eigenvalue.
Thus any matrix representing the symmetric bilinear form
\[
A^1(\overline{\mathrm{M}})\times A^1(\overline{\mathrm{M}}) \longrightarrow \mathbb{R}, \qquad (a_1,a_2) \longmapsto \text{deg}\Big(\eta_1\eta_2\mathrm{L}_{0}^{d-2}\Big)
\]
has at most one positive eigenvalue.
Now consider the symmetric matrix
\[
\mathrm{H}_{0}=\left[\begin{array}{cc}
\text{deg} \big(\hspace{0.3mm}\alpha\hspace{0.3mm}\alpha\hspace{0.5mm}\mathrm{L}_{0}^{d-2}\big)  & \text{deg} \big(\hspace{0.3mm}\alpha\hspace{0.5mm} \mathrm{L}_0\mathrm{L}_{0}^{d-2}\big)  \\
\text{deg} \big(\alpha \mathrm{L}_0 \mathrm{L}_{0}^{d-2}\big) & \text{deg} \big(\mathrm{L}_0 \mathrm{L}_0 \mathrm{L}_{0}^{d-2}\big) 
\end{array}\right].
\]
Cauchy's eigenvalue interlacing theorem shows that $\mathrm{H}_{0}$ has at most one positive eigenvalue.
On the other hand, $\mathrm{H}_{0}$ has at least one positive eigenvalue, because its lower-right diagonal entry is positive.
The determinant of $\mathrm{H}_{0}$ is a positive multiple of
\[
\frac{d}{d-1}\Bigg(\sum_{I \in \mathcal{I}_d} \prod_{e \in I} w_e\Bigg) \Bigg(\sum_{I \in \mathcal{I}_{d-2}} \prod_{e \in I} w_e\Bigg)
-\Bigg(\sum_{I \in \mathcal{I}_{d-1}} \prod_{e \in I} w_e\Bigg)^2,
\]
which must be nonpositive by the condition on the eigenvalues of $\mathrm{H}_{0}$.

%\begin{lemma}\label{lem:a}
%Let $a$ be the real-valued function on $2^{\overline{E}}$ defined by
%\[
%a_I=\left\{\begin{array}{ll} 
%1&\text{if $I$ contains $0$ and $I$ does not contain some element of $\overline{E}$,}\\
%0&\text{if $I$ does not contain $0$ or $I$ contains all elements of $\overline{E}$.}
%\end{array}\right.
%\]
%Then $a_\varnothing=0$, $a_{\overline{E}}=0$, and, for any pair of  subsets $I_1,I_2$ of $\overline{E}$, we have
%\[
%a_{I_1}+a_{I_2} \ge a_{I_1 \cap I_2} +a_{I_1 \cup I_2}.
%\]
%\end{lemma}

%The inequality of Lemma \ref{lem:a} is straightforward to check. In fact, for any subsets $I_1$,  $I_2$ of $\overline{E}$,
%\[
%a_{I_1}+a_{I_2} - a_{I_1 \cap I_2} - a_{I_1 \cup I_2}
%=\left\{\begin{array}{ll} 
%1&\text{if $I_1$, $I_2$ are incomparable and $I_1 \cup I_2=\overline{E}$,}\\
%0&\text{if $I_1$, $I_2$ are comparable or $I_1 \cup I_2\neq \overline{E}$.}
%\end{array}\right.
%\]

\section{Proof of Theorem \ref{thm:main3}}\label{sec:example}

The upper bound follows from Theorem \ref{thm:main1}.
We construct explicit vector configurations over $\mathbb{F}$ to show the lower bound $\frac{8}{7}$.

Fix a prime number $p$ and an integer  $d \ge 2$.
Consider the $d$-dimensional vector space $\mathbb{F}_p^d$
over the field with $p$ elements, and let $\mathbf{e}_1,\mathbf{e}_2,\ldots,\mathbf{e}_d$ be the standard basis vectors of $\mathbb{F}_p^d$.

\begin{definition}\label{def:vectors}
Let $\mathrm{M}_{p}^d$ be the rank $d$ matroid represented by the vectors
$\mathbf{e}_1$, $\mathbf{e}_2+\cdots+\mathbf{e}_d$, and
\[
\begin{array}{cccc}
1\mathbf{e}_1+\mathbf{e}_2, &  2\mathbf{e}_1+\mathbf{e}_2, & \cdots &  p\mathbf{e}_1+\mathbf{e}_2,\\
1\mathbf{e}_1+\mathbf{e}_3, &  2\mathbf{e}_1+\mathbf{e}_3, & \cdots &  p\mathbf{e}_1+\mathbf{e}_3,\\
\vdots & \vdots &\ddots&\vdots \\
1\mathbf{e}_1+\mathbf{e}_d, &  2\mathbf{e}_1+\mathbf{e}_d, & \cdots & p\mathbf{e}_1+\mathbf{e}_d.
\end{array}
\]
We write $i$ for the vector $\mathbf{e}_1$ and $j$ for the vector $\mathbf{e}_2+\cdots+\mathbf{e}_d$. %$\mathbf{i}$ and $\mathbf{j}$ when viewed as  elements of $\mathrm{M}_{p}^d$.
\end{definition}

The matroid $\mathrm{M}^4_2$ is isomorphic to the  matroid $\mathrm{S}_8$ mentioned in the introduction.
For any $d$, the matroid $\mathrm{M}^d_2$ is the self-dual matroid obtained from the binary spike $\mathrm{Z}_d$ in Oxley's list  by deleting any element other than the tip \cite[Appendix]{Oxley}.\footnote{According to Geelen \cite{Geelen}, ``it all goes wrong for spikes.'' The spike $\mathrm{Z}_d$ was first used by Seymour to demonstrate that
an independence oracle algorithm for testing whether a matroid is binary cannot run in polynomial time relative to the size of the ground set \cite{Seymour81}.} For any $p$, the matroid $\mathrm{M}_p^d$ has a spike-like structure in that it has a ``tip'' $i$ and ``legs''
\[
\mathrm{L}_m=\Big\{ \mathbf{e}_1, 1\mathbf{e}_1+\mathbf{e}_m, 2\mathbf{e}_1+\mathbf{e}_m,\ldots, p\mathbf{e}_1+\mathbf{e}_m\Big\} \ \ \text{for $m=2,\ldots,d$}.
\]
For general spikes and their role in structural matroid theory, see \cite[Chapter 14]{Oxley}.
As before, we write $\mathcal{B}_{ij}$, $\mathcal{B}^i_{j}$, $\mathcal{B}^j_{i}$, $\mathcal{B}^{ij}$ for the set of bases  of $\mathrm{M}=\mathrm{M}^d_p$ containing and/or not containing $i,j$.

\begin{enumerate}[(1)]\itemsep 5pt
\item The contraction $\mathrm{M} / i / j$ is  
the uniform matroid $\mathrm{U}_{d-2,d-1}$ with each element replaced by $p$ parallel copies. 
Any basis of the contraction is disjoint from one of the parallel classes and contains exactly one point from each one of the remaining parallel classes.
Therefore, 
\[
|\mathcal{B}_{ij}(\mathrm{M})|=(d-1){p\choose 1}^{d-2}.
\]
\item The deletion $\mathrm{M} \setminus i \setminus j$ is represented by the $p$-point lines $\mathrm{L}_2 \setminus \mathbf{e}_1$, $\ldots$, $\mathrm{L}_d \setminus \mathbf{e}_1$ in $\mathbb{F}_p^d$.
Any basis of the deletion must contain exactly two points from one of the lines and one point from each one of the remaining lines. %conversely, any such set of vectors is a basis of $\mathrm{M} \setminus i \setminus j$. 
Therefore, 
\[
|\mathcal{B}^{ij}(\mathrm{M})|=(d-1) {p \choose 2} {p \choose 1}^{d-2}.
\]
\item The contraction-deletion $\mathrm{M} /i \setminus  j$ is the boolean matroid $\mathrm{U}_{d-1,d-1}$ with each element replaced by $p$ parallel copies. 
Any basis of the contraction-deletion contains exactly one element from each parallel class.
Therefore,
\[
|\mathcal{B}_{i}^j(\mathrm{M})|={p\choose 1}^{d-1}.
\]
\end{enumerate}

It remains to compute the number of bases of $\mathrm{M}$ not containing $i$ and containing $j$.
There are two types of such bases, corresponding to the two terms in the right-hand side of
\[
|\mathcal{B}^i_j(\mathrm{M})|=(p^{d-1}-p^{d-2})+(d-1)(d-2){p \choose 2}{p \choose 1}^{d-3}.
\]
A basis of the first type contains exactly one point from each one of the $p$-point lines $\mathrm{L}_2 \setminus \mathbf{e}_1$, $\ldots$, $\mathrm{L}_d \setminus \mathbf{e}_1$.
The determinant formula
\[
\det\left[\begin{array}{cccccc}
0&k_2&k_3&\cdots&k_d\\
1&1&0&\cdots&0\\
1&0&1&\cdots&0\\
\vdots&\vdots&\vdots&\ddots&\vdots\\
1&0&0&\cdots&1
\end{array}\right]=-k_2-k_3-\cdots-k_d
\]
shows that there are exactly $(p^{d-1}-p^{d-2})$ such bases.
A basis of the second type contains exactly two points from one of the lines, %$\mathrm{L}_2 \setminus \mathbf{i},\ldots,\mathrm{L}_d \setminus \mathbf{i}$, 
no point from another, and one point from each one of the remaining lines.
It is clear that any basis in $\mathcal{B}^i_j$ must be one of the two types.
%The determinant computation
%\[
%\det\left[\begin{array}{cccccc}
%0&k_2&k_3&k_4&\cdots&k_d\\
%1&1&1&0&\cdots&0\\
%1&0&0&0&\cdots&0\\
%1&0&0&1&\cdots&0\\
%\vdots&\vdots&\vdots&\vdots&\ddots&\vdots\\
%1&0&0&0&\cdots&1
%\end{array}\right]=-k_2+k_3
%\]
%Clearly, there are $(d-1)(d-2){p \choose 2}{p\choose 1}^{d-3}$ such bases.
%It is clear that any basis in $\mathcal{B}_j^i$ must be one of the two types.

Combining  the four numbers, we obtain a ratio that depends only on $d$ and not on $p$:
\[
\frac{|\mathcal{B}_{ij}(\mathrm{M})||\mathcal{B}^{ij}(\mathrm{M})|}{|\hspace{0.5mm}\mathcal{B}^i_j(\mathrm{M})\hspace{0.5mm}| |\hspace{0.5mm}\mathcal{B}^i_j(\mathrm{M})\hspace{0.5mm}|}= \frac{d^2-2d+1}{d^2-3d+4}.
\]
The maximum of the ratio is $\frac{8}{7}$, achieved when $d=5$. This proves Theorem \ref{thm:main3} when the field $\mathbb{F}$ has characteristic $p$.

For fields of characteristic zero, 
let $i$ and $j$ be distinct elements of a finite set $A_1$.
Let $A_2,\ldots,A_d$ be a family of $(m+1)$-element subsets  of $A_1\setminus i$ whose union is $A_1 \setminus i$ and whose pairwise intersection is $\{j\}$.
We extend the transversal matroid construction in Example \ref{ex:transversal} as follows.

\begin{definition}
The matroid $\mathrm{N}_m^d$ is the transversal matroid of the family $A_1,A_2,\ldots,A_d$.
\end{definition}

The matroid $\mathrm{N}^6_2$ is isomorphic to the truncated graphic matroid in Example \ref{ex:graphic}.
By definition,  bases of $\mathrm{N}=\mathrm{N}_m^d$ are the systems of distinct representatives of $A_1,A_2,\ldots,A_d$.
For $m=p$, the matroids $\mathrm{M}_p^d$ and $\mathrm{N}_m^d$ share three of the four minors obtained by deleting and/or contracting $i,j$.
For any $m$, we have
\[
|\mathcal{B}_{ij}(\mathrm{N})|=(d-1){m\choose 1}^{d-2}, \quad |\mathcal{B}^{ij}(\mathrm{N})|=(d-1) {m \choose 2} {m \choose 1}^{d-2}, \quad |\mathcal{B}_{i}^j(\mathrm{N})|={m\choose 1}^{d-1}.
\]
There are two types of bases  of $\mathrm{N}$ not containing $i$ and containing $j$, corresponding to the two terms in the right-hand side of
\[
|\mathcal{B}^i_j(\mathrm{N})|=m^{d-1}+(d-1)(d-2){m \choose 2}{m \choose 1}^{d-3}.
\]
A basis of the first type contains exactly one element from each one of the sets $A_2 \setminus j,\ldots,A_d \setminus j$.
A basis of the second type contains exactly two points from one of the sets $A_k \setminus j$, %$\mathrm{L}_2 \setminus \mathbf{i},\ldots,\mathrm{L}_d \setminus \mathbf{i}$, 
no point from another $A_k \setminus j$, and one point from each one of the remaining $A_k \setminus j$.

Combining  the four numbers and taking the limit $m \to \infty$, we obtain the same ratio as before:
\[
\lim_{m \to \infty}\frac{|\mathcal{B}_{ij}(\mathrm{N})||\mathcal{B}^{ij}(\mathrm{N})|}{|\hspace{0.5mm}\mathcal{B}^i_j(\mathrm{N})\hspace{0.5mm}| |\hspace{0.5mm}\mathcal{B}^i_j(\mathrm{N})\hspace{0.5mm}|}
= \frac{d^2-2d+1}{d^2-3d+4}.%\Bigg[1-\frac{2}{(d^2-3d+4)m-(d^2-3d+2)}\Bigg].
\]
%The maximum of the ratio is $\frac{8}{7}$, achieved when $d=5$. 
Since transversal matroids are representable over any infinite field \cite[Chapter 11]{Oxley}, this proves Theorem \ref{thm:main3} when  $\mathbb{F}$ has characteristic $0$.
In fact, for any positive integer $p=m$, the set of vectors in Definition \ref{def:vectors}  viewed as elements of $\mathbb{Q}^d$ represents $\mathrm{N}^d_m$.

\section{Proofs of Corollaries \ref{cor:entropy} and \ref{cor:concentration}}\label{sec:corollaries}

Let $\mathrm{I}_\mathrm{M}$ be the size of an independent set drawn uniformly at random from the collection of all independent sets of a rank $d$ matroid $\mathrm{M}$.
As discussed in the introduction, Corollary \ref{cor:Mason} and \cite[Theorem 2.5]{Johnson} together imply
\[
H(\mathrm{I}_\mathrm{M})  \le \frac{1}{2} \log \Bigg(2 \pi e\Big(\mathbb{E}(I_\mathrm{M})+\frac{1}{12}\Big)\Bigg) \le \frac{1}{2} \log \Bigg(2 \pi e\Big(d+\frac{1}{12}\Big)\Bigg).
\]
Corollary \ref{cor:concentration} follows from the upper bound of $H(\mathrm{I}_\mathrm{M}) $ and the easy implication
\[
H(\mathrm{X}) \le \log t \Longrightarrow \max_k \mathbb{P}(X=k) \ge \frac{1}{t}.
\]
Corollary \ref{cor:entropy} follows from the upper bound of $H(\mathrm{I}_\mathrm{M}) $ and the estimate
\[
\frac{1}{2} \log\Bigg( \frac{\pi}{2} d \Bigg)\le\log \frac{2^d}{{d \choose d/2}} \le  \sum_{k=0}^d \frac{{d \choose k}}{2^d} \log\frac{2^d}{{d \choose k}} \le \sup_{\text{rk}(\mathrm{M})=d}H(\mathrm{I}_\mathrm{M}).
\]
The first inequality follows from Stirling's approximation,
the second inequality follows from  ${d \choose k} \le {d \choose d/2}$,
and the third inequality is witnessed
by the rank $d$ boolean matroid.

\end{document}